\numberwithin{equation}{section}
\theoremstyle{remark}
\newtheorem{remark}{Remark}[section]
\newtheorem{example}[remark]{Example}
\theoremstyle{definition}
\theoremstyle{plain}
\newtheorem{theorem}[remark]{Theorem}
\newtheorem{lemma}[remark]{Lemma}
\newtheorem{proposition}[remark]{Proposition}
\renewcommand{\O}{\mathcal{O}}
\DeclareMathOperator{\jac}{Jac}
\DeclareMathOperator{\Alb}{Alb}
\DeclareMathOperator{\alb}{alb}
\DeclareMathOperator{\Hom}{Hom}
\DeclareMathOperator{\Pic}{Pic}
\DeclareMathOperator{\ns}{NS}
\DeclareMathOperator{\im}{Im}
\DeclareMathOperator{\aut}{Aut}
\DeclarePairedDelimiter{\intinf}{\lfloor}{\rfloor}
\newcommand\restr[2]{{
  \left.\kern-\nulldelimiterspace 
  #1 
  \vphantom{\big|} 
  \right|_{#2} 
  }}
\title{Surfaces close to the Severi lines}
\author{Federico Conti}
\begin{document}

\maketitle

\begin{abstract}
 Let $X$ be a surface of general type with maximal Albanese dimension: if $K_X^2<\frac{9}{2}\chi(\O_X)$, one has $K_X^2\geq 4\chi(\O_X)+4(q-2)$. We give a complete classification of surfaces for which equality holds for $q(X)\geq 3$: these are surfaces whose canonical model is a double cover of a product elliptic surface branched over an ample divisor with at most negligible singularities which intersects the elliptic fibre twice.

We also prove, in the same hypothesis, that a surface $X$ with $K_X^2\neq 4\chi(\O_X)+4(q-2)$ satisfies $K_X^2\geq 4\chi(\O_X)+8(q-2)$ and we give a characterization of surfaces for which the equality holds. These are surfaces whose canonical model  is a double cover of an isotrivial smooth elliptic surface branched over an ample divisor with at most negligible singularities whose intersection with the elliptic fibre is $4$.
\end{abstract}

\tableofcontents

\section{Introduction}

Let $X$ be a minimal surface of general type of maximal Albanese dimension (recall that a surface is called of maximal Albanese dimension if its Albanese morphism is generically finite). We denote by $K_X$ the canonical divisor, by $\chi(\O_X)$ the Euler characteristic of the structure sheaf and by $q=h^1(\O_X)$ the irregularity. 

In this paper we are interested in characterizing  surfaces which lie on or close to the Severi lines, i.e. surfaces for which the quantity
\begin{equation}
\label{int}
K_X^2-4\chi(\O_X)-4(q-2)
\end{equation}
vanishes or is "small" provided that $K_X^2<\frac{9}{2}\chi(\O_X)$. This value is strictly related to the so called Severi inequality (cf. \cite{par_sev}), which states that a surface of general type of maximal Albanese dimension satisfies
\begin{equation}
K_X^2\geq 4\chi (\O_X).
\label{sev}
\end{equation}
In \cite{barparsto} there is a characterization of surfaces for which the inequality \ref{sev} is indeed an equality, namely these are surfaces whose canonical model is a double cover of its Albanese variety branched over an ample divisor with at most negligible singularities (in particular $q=2$). There are many generalizations of the Severi inequality; in particular Lu and Zuo have proved in \cite{lu} a similar inequality involving also the irregularity $q$:  a surface of general type and maximal Albanese dimension satisfies
\begin{equation}
K_X^2\geq\min\Bigl\{\frac{9}{2}\chi(\O_X),4\chi(\O_X)+4(q-2)\Bigr\}
\label{lusev}
\end{equation}
or, equivalently, if $K_X^2<\frac{9}{2}\chi(\O_X)$ then $K_X^2\geq 4\chi(\O_X)+4(q-2)$.
They also give conditions for a surface to satisfy the equality
\begin{equation}
\frac{9}{2}\chi(\O_X)>K_X^2=4\chi(\O_X)+4(q-2).
\label{uglu}
\end{equation}
The condition $K_X^2<\frac{9}{2}\chi(\O_X)$ is necessary to prove that there exists an involution $i$ for which the Albanese morphism of $X$ is composed with $i$ (cf. \cite{lu} Theorem 3.1) which is central in their argument. 
There is a single step, \cite{lu} Lemma 4.4(2), where the condition $K_X^2<\frac{9}{2}\chi(\O_X)$ is really needed in their proof and it is not enough to require that  the Albanese morphism of $X$ is composed with an involution.

The first main result of this paper is a complete characterization of surfaces satisfying $K_X^2=4\chi(\O_X)+4(q-2)$ in case $q\geq 3$ and $K_X^2<\frac{9}{2}\chi(\O_X)$: Lu and Zuo have proved that the canonical model of such a surface is a double cover of a smooth isotrivial elliptic surface branched over a divisor $R$ with at most negligible singularities. We prove here that this elliptic surface has to be a product $C\times E$ and we also determine the linear class of the branch divisor. 

\begin{theorem}
\label{mioa}
Let $X$ be a surface of general type with maximal Albanese dimension satisfying $q=q(X)\geq 3$ such that $K_X^2<\frac{9}{2}\chi(\O_X)$.
Then 
\begin{equation*}
K_X^2= 4\chi(\O_X) +4(q-2)
\end{equation*}
if and only if the canonical model of $X$ is isomorphic to a double cover of a product elliptic surface $Y=C\times E$ where $E$ is an elliptic curve and  $C$ is a curve of  genus $q-1$, whose branch divisor $R$ has at most negligible singularities and 
\[R\sim_{lin}C_1+C_2+\sum_{i=1}^{2d}E_i,\]
where the $E_i$ (respectively $C_i$) are fibres of the first projection (respectively the second projection) of $C\times E$ and $d>7(q-2)$. Moreover, we have that $\Alb(X)\simeq\Alb(Y)$.
\end{theorem}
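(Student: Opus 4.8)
The plan is to prove the two implications separately. The implication ($\Leftarrow$) is a computation; set $Y:=C\times E$. Write $R=2L$ in $\Pic(Y)$ with $L=p_1^{*}M\otimes p_2^{*}N$, $\deg M=d$, $\deg N=1$ and $p_1,p_2$ the two projections; that $L$ has no $\Hom(\jac(C),E)$-component follows from the prescribed linear class of $R=2L$, as $\Hom(\jac(C),E)$ is torsion free. Feeding the usual formulas for the canonical model of the double cover $\pi\colon X\to Y$ — $K_X^{2}=2(K_Y+L)^{2}$, $\chi(\O_X)=2\chi(\O_Y)+\tfrac12\,L\cdot(L+K_Y)$ and $\pi_{*}\O_X=\O_Y\oplus L^{-1}$ — with $\chi(\O_Y)=0$, $K_Y=p_1^{*}K_C$ and the intersection form of a product of curves, one obtains $K_X^{2}=4(d+2q-4)$ and $\chi(\O_X)=d+q-2$, hence $K_X^{2}=4\chi(\O_X)+4(q-2)$. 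For the irregularity, $q(X)=q+h^{1}(Y,L^{-1})=q+h^{1}(Y,K_Y\otimes L)$, and K\"unneth together with Serre duality on each factor make this group vanish, since the relevant line bundles $K_E\otimes N^{-1}$ and $\O_C(-d)$ have negative degree; so $q(X)=q$. A direct substitution identifies $K_X^{2}<\tfrac92\chi(\O_X)$ with $d>7(q-2)$. Finally ampleness of $R$ makes $K_Y+L$ ample, so $K_X$ is nef and big, $X$ is its own canonical model and of general type, of maximal Albanese dimension because $Y\hookrightarrow\jac(C)\times E$; and $q(X)=q(Y)$ forces $\pi^{*}\colon H^{1}(\O_Y)\to H^{1}(\O_X)$ to be an isomorphism, whence $\Alb(X)\cong\Alb(Y)\cong\jac(C)\times E$.

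For ($\Rightarrow$) I would begin from the theorem of Lu and Zuo: the canonical model is a double cover $\pi\colon X_{\mathrm{can}}\to Y$ of a smooth isotrivial elliptic surface $\phi\colon Y\to B$, with ample branch $R\sim2L$ having at most negligible singularities, and the Albanese morphism of $X$ composed with the deck involution $i$ of $\pi$. As $\phi$ is smooth it is relatively minimal, so $e(Y)=0=K_Y^{2}$ and hence $\chi(\O_Y)=0$; therefore $\omega_{Y/B}=\phi^{*}\mathcal L$ with $\deg\mathcal L=0$ and $K_Y\equiv(2g(B)-2)F$, $F$ the fibre class. Substituting the double cover formulas into $K_X^{2}=4\chi(\O_X)+4(q-2)$ and using $K_Y^{2}=0$ collapses the identity to $L\cdot K_Y=2(q-2)$, i.e.\ $(g(B)-1)(L\cdot F)=q-2$ with $L\cdot F=\tfrac12 R\cdot F\ge1$ by ampleness of $R$; in particular $g(B)\le q-1$. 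On the other hand $q(Y)=q(X_{\mathrm{can}}/i)=q(X)=q$ because $\alb_X$ is composed with $i$, while $q(Y)\le g(B)+1$ since $h^{0}(\Omega^{1}_{Y/B})=h^{0}(B,\phi_{*}\omega_{Y/B})\le1$ (that sheaf has degree $\chi(\O_Y)=0$). Combining these, $g(B)=q-1$, $L\cdot F=1$ — so $R\cdot F=2$ — and $q(Y)=g(B)+1$, which forces $\phi_{*}\omega_{Y/B}=\O_B$, hence $\omega_{Y/B}=\O_Y$ and $K_Y=\phi^{*}K_B$.

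It remains to upgrade $Y$ to a product. The trivialising section of $\omega_{Y/B}$ lifts to a holomorphic $1$-form $\eta$ on $Y$, nowhere zero on every fibre; $\eta$ is closed, so integrating it yields a morphism $h\colon Y\to E:=\C/\Lambda_{\eta}$ onto an elliptic curve, an isogeny on each fibre, and $\Phi:=(\phi,h)\colon Y\to B\times E$ is finite and surjective. The content of the statement is that $\deg\Phi=1$. I would establish this by combining: (i) that $\alb_Y\colon Y\to\Alb(Y)$ is birational onto its image, because $Y=X_{\mathrm{can}}/i$ carries the non-trivial part of the degree-$2$ Albanese factorisation of $X$; (ii) that, after composing with the Abel--Jacobi embedding $B\hookrightarrow\jac(B)$, the map $\Phi$ factors through $\alb_Y$ followed by an isogeny $\Alb(Y)\to\jac(B)\times E$; and (iii) the ampleness and the mild singularities of $R$, used to exclude that $Y$ is a non-trivial translation quotient $(\widetilde B\times E)/G$ with $\lvert G\rvert>1$ (equivalently, a non-trivial $E$-torsor over $B$). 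A convenient way to organise (iii) is to pass to the \'etale $G$-cover $\widetilde X:=X_{\mathrm{can}}\times_Y(\widetilde B\times E)$: it is again a canonical model, it is a double cover of the \emph{product} $\widetilde B\times E$, and from $K_{\widetilde X}^{2}=\lvert G\rvert K_{X_{\mathrm{can}}}^{2}$, $\chi(\O_{\widetilde X})=\lvert G\rvert\chi(\O_{X_{\mathrm{can}}})$ and $q(\widetilde X)\ge g(\widetilde B)+1$ one checks that $\widetilde X$ still satisfies $K_{\widetilde X}^{2}<\tfrac92\chi(\O_{\widetilde X})$, $q(\widetilde X)\ge3$ and in fact the equality of Theorem~\ref{mioa}; applying the theorem to $\widetilde X$ should then force $\lvert G\rvert=1$. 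Once $\Phi$ is an isomorphism, put $C:=B$, of genus $q-1$; writing $L\equiv aF_C+bF_E$ numerically on $C\times E$ (with $F_E$ the elliptic fibre and $F_C$ the class of $C\times\{\mathrm{pt}\}$) one has $a=L\cdot F_E=1$, and from $L^{2}=2\chi(\O_X)-2(q-2)=2d$ with $d:=\chi(\O_X)-(q-2)$ one gets $b=d$; a supplementary argument that $L$, hence $R=2L$, has no $\Hom(\jac(C),E)$-component yields the linear equivalence $R\sim_{lin}C_1+C_2+\sum_{i=1}^{2d}E_i$. The inequality $d>7(q-2)$ is $K_X^{2}<\tfrac92\chi(\O_X)$ rewritten, and $\Alb(X)\cong\jac(C)\times E$ follows from $q(X)=q(Y)$ as in direction ($\Leftarrow$).

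I expect step (iii) to be the main obstacle: passing from ``smooth isotrivial elliptic surface with $\omega_{Y/B}=\O_Y$ and base of genus $q-1$'' to the \emph{literal} product $C\times E$ is precisely the point at which one must use the full equality $K_X^{2}=4\chi(\O_X)+4(q-2)$ — not merely $K_X^{2}<\tfrac92\chi(\O_X)$ — together with the geometry of the branch divisor; the ``no twist'' claim for $R$ in the previous paragraph is a milder, more technical form of the same difficulty.
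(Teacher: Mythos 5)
Your direction ($\Leftarrow$) is essentially the paper's Example \ref{es1} and is correct, and in ($\Rightarrow$) your numerical reduction from Lu--Zuo's theorem to ``smooth isotrivial elliptic surface over a base of genus $q-1$ with $L\cdot F=1$, no multiple fibres'' matches the paper's Section 4 (Remark \ref{uguaglianza}). The genuine gap is exactly where you predict it, and the mechanism you propose for closing it does not work. Passing to the \'etale trivialising cover $\widetilde Y=\widetilde B\times E$ with Galois group $G$ and checking that $\widetilde X$ still satisfies the equality is right (this is Lemma \ref{etale} and Remark \ref{rem_bis}), but ``applying the theorem to $\widetilde X$'' is circular --- you are in the middle of proving that theorem --- and even granting it, its conclusion concerns $\widetilde X$ and the product $\widetilde B\times E$, which you already have; it carries no information about the $G$-action, so it cannot force $\lvert G\rvert=1$ or the splitting of $Y$. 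Note also that ``$\lvert G\rvert=1$'' is not the right target: Example \ref{es4} exhibits a nontrivial $G$ whose quotient is \emph{not} a product, the difference being that there $R\cdot F=4$. What actually makes the case $R\cdot F=2$ special is Lemma \ref{lem2}: the quotient $(\widetilde B\times E)/G$ splits if and only if there is a $G$-invariant line bundle on $\widetilde B\times E$ of fibre degree one, and here $\widetilde L=\delta^*L$ is such a bundle since $\widetilde L\cdot E=\tfrac12\widetilde R\cdot E=1$. The proof of that lemma is the missing idea: using the equivariant description of $\Pic(\widetilde B\times E)$ (Propositions \ref{Teo} and \ref{rmpic}) and the coordinate change $(c,e)\mapsto(c,e-f(c))$ with $f=\beta(\widetilde L)$, one shows the $G$-action on the $E$-factor becomes a translation, so the quotient is a product. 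Your items (i)--(ii) about $\alb_Y$ do not substitute for this.

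The secondary gap is your ``no $\Hom(\jac(C),E)$-component'' claim for $L$. As literally stated it can fail: when $\Hom_{c_0}(C,E)\neq0$, the See-saw argument (Lemma \ref{lem1}) only yields $R\sim_{lin}\Gamma_{g+e_1}+\Gamma_{g+e_2}+\sum_iE_i$ with $2g=\beta(R)$, i.e.\ the branch divisor may genuinely be a sum of two translated graphs plus fibres. The correct statement is up to the automorphism $(c,e)\mapsto(c,e-g(c))$ of $C\times E$, which carries the graphs to fibres of $\pi_E$ and the double cover to an isomorphic one (Lemma \ref{es2}); since the theorem only asserts that the canonical model is \emph{isomorphic} to a double cover with branch divisor $C_1+C_2+\sum E_i$, this automorphism argument is what is needed, not a vanishing of the $\Hom$-component of the given $L$.
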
 

In Example \ref{es1}, we will give a relation between the invariants of $X$ and the number $d$ appearing in the linear class of the branch divisor $R$. Actually we will see that $K_X^2=8(q-2)+4d$ and $\chi(\O_X)=q-2+d$ and we will give a construction of a surface satisfying the hypotheses of Theorem \ref{mioa} for every $d>7(q-2)$ (this inequality is required to satisfy the hypothesis $K_X^2<\frac{9}{2}\chi(\O_X)$). In particular, for every $q\geq 3$, this gives an unlimited set of couples $(a,b)$, for which there exists such a surface with invariants $K_X^2=a$ and $\chi(\O_X)=b$.

The second result is about surfaces that are not on the Severi lines but are close to them. We see that in this case $K_X^2\geq 4\chi(\O_X)+ 8(q-2)$ and we also give a characterization for surfaces that satisfy this equality.

\begin{theorem}
\label{miob}
Let $X$ be a minimal surface of general type with maximal Albanese dimension with $K_X^2<\frac{9}{2}\chi(\O_X)$.
\begin{enumerate}
\item If $K^2_X>4\chi(\O_X)+4(q-2)$, then $K^2_X\geq 4\chi(\O_X)+8(q-2).$
\item If $q=2$ and $K^2_X>4\chi(\O_X)$, then $K^2_X\geq 4\chi(\O_X)+2.$
\end{enumerate}
Moreover, if $q\geq 3$, equality holds, i.e.
\[K^2_X=4\chi(\O_X)+8(q-2),\]
if and only if the canonical model of $X$ is isomorphic to a double cover of a smooth isotrivial elliptic surface $Y$ over a curve $C$ of genus $q-1$, branched over a divisor $R$ with at worst negligible singularities for which $K_Y.R=8(q-2)$. In particular, we have that $\Alb(X)\simeq\Alb(Y)$.
\end{theorem}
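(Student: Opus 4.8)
The plan is to carry out the argument of Lu and Zuo for the inequality \eqref{lusev} while keeping track of exactly where the bound $K_X^2<\frac92\chi(\O_X)$ is used, and then to read off the statements from a short intersection computation on the quotient of $X_{\mathrm{can}}$ by the involution. Since $K_X^2<\frac92\chi(\O_X)$, the canonical model $X_{\mathrm{can}}$ carries an involution $\sigma$ compatible with the Albanese map (\cite{lu} Theorem~3.1), so $\pi\colon X_{\mathrm{can}}\to Y:=X_{\mathrm{can}}/\sigma$ is a flat double cover, branched along a divisor $R\sim_{lin}2L$ with at worst negligible singularities, and $\Alb(X_{\mathrm{can}})$ surjects onto $\Alb(Y)$. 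The decisive point---this is the step \cite{lu} Lemma~4.4(2) singled out in the Introduction, which, as remarked there, really needs the numerical bound---is that for $q\geq 3$ the surface $Y$ is a \emph{smooth} isotrivial elliptic surface $g\colon Y\to C$ over a curve $C$ of genus $q-1$, with $q(Y)=q$; for $q=2$ one only gets that $Y$ carries an isotrivial, but possibly not smooth, elliptic fibration over an elliptic curve $C$. I expect establishing this---in particular the \emph{smoothness} of $g$ when $q\geq 3$---to be the main obstacle: without $\chi(\O_Y)=0$ the formula below would leave room for the forbidden intermediate values $4(q-2)<K_X^2-4\chi(\O_X)<8(q-2)$.

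For the flat double cover $\pi$ with negligible branch singularities one has $K_X^2=2(K_Y+L)^2$ and $\chi(\O_X)=2\chi(\O_Y)+\tfrac12 L\cdot(K_Y+L)$, whence
\[K_X^2-4\chi(\O_X)=2K_Y^2+2\,K_Y\cdot L-8\chi(\O_Y).\]
Since $g$ is relatively minimal and elliptic, $K_Y^2=0$ and, once one has checked that $g$ has no multiple fibres, $K_Y$ is numerically $\bigl(2(q-2)+\chi(\O_Y)\bigr)F$ for a fibre $F$; moreover $F\cdot R=2\,(F\cdot L)$ is even and $F\cdot L\geq 1$, since otherwise $\pi$ would be unramified over the general fibre of $g$ and $X$ would be an elliptic surface rather than of general type. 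Substituting,
\[K_X^2-4\chi(\O_X)=4(q-2)\,(F\cdot L)+2\chi(\O_Y)\bigl((F\cdot L)-4\bigr).\]

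For $q\geq 3$ the structure step gives $\chi(\O_Y)=0$, so $K_X^2-4\chi(\O_X)=4(q-2)(F\cdot L)$ with $F\cdot L$ a positive integer; thus $K_X^2>4\chi(\O_X)+4(q-2)$ forces $F\cdot L\geq 2$ and hence $K_X^2\geq 4\chi(\O_X)+8(q-2)$, which is (1), while equality holds exactly when $F\cdot L=2$, i.e.\ $F\cdot R=4$, i.e.\ $K_Y\cdot R=2(q-2)(F\cdot R)=8(q-2)$---precisely the description in the statement. For $q=2$ we have $4(q-2)=0$; if $K_X^2>4\chi(\O_X)$ the identity forces $\chi(\O_Y)\geq 1$ and Severi's inequality \eqref{sev} forces $(F\cdot L)-4\geq 0$, so in fact $(F\cdot L)-4\geq 1$ and $K_X^2-4\chi(\O_X)=2\chi(\O_Y)\bigl((F\cdot L)-4\bigr)\geq 2$; here the extra care is exactly in ruling out multiple fibres of $g$, equivalently in excluding the value $K_X^2-4\chi(\O_X)=1$.

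Conversely, if $X_{\mathrm{can}}$ is a double cover of a smooth isotrivial elliptic surface $g\colon Y\to C$ with $g(C)=q-1$, branched along $R$ with negligible singularities and $K_Y\cdot R=8(q-2)$, then $K_Y^2=\chi(\O_Y)=0$ and the double-cover formulae give $K_X^2-4\chi(\O_X)=2\,K_Y\cdot L=K_Y\cdot R=8(q-2)$, while $K_X^2<\frac92\chi(\O_X)$ amounts to $L^2>28(q-2)$, so these $X$ do belong to the class under consideration; since $K_{X_{\mathrm{can}}}=\pi^*(K_Y+L)$ is ample ($K_Y$ nef, $L$ ample) the cover is a canonical model, with maximal Albanese dimension inherited from $Y$. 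Finally $\Alb(X)\simeq\Alb(Y)$: $\pi$ gives a surjection $\Alb(X_{\mathrm{can}})\twoheadrightarrow\Alb(Y)$, a standard vanishing yields $h^1(Y,L^{-1})=0$ and hence $q(X)=q(Y)$, so this surjection is an isogeny of abelian varieties of the same dimension, and it is injective because $\sigma$ acts trivially on $H^0(\Omega_X^1)$ and the branch is negligible.
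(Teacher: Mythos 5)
Your overall strategy --- pass to the quotient by the Albanese-compatible involution, apply the double-cover formulas, and read the inequality off the parity of the fibre degree --- is the same as the paper's, but there are two genuine gaps. First, you assert (and defer as ``the main obstacle'') that for $q\geq 3$ the quotient $Y$ is a smooth isotrivial elliptic surface. This is not true in general: the minimal model $Y_0$ of the resolved quotient can be an Abelian surface ($q=2$), a properly elliptic surface ($q\geq3$), \emph{or a minimal surface of general type} (any $q\geq 2$), and the general-type case cannot be excluded a priori. The paper handles it twice: for part (1) it shows via \cite{lu} Theorem 1.3 and Lemma 4.4 (Equations \ref{eqgen1} and \ref{eqgen2}) that a general-type quotient already forces $K_X^2-4\chi(\O_X)\geq 8(q-2)$; and for the equality characterization it must \emph{rule out} the general-type quotient by a separate contradiction (equality would force $K_{Y_0}.R_0=0$, hence $R_0$ a union of $(-2)$-curves, contradicting nefness of $K_X$ unless the cover is \'etale, which contradicts Remark \ref{alb}). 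Citing Lemma 4.4(2) does not give you ellipticity of $Y$; it only gives a stronger numerical inequality in one of the general-type subcases. Without this, both part (1) and the ``only if'' direction of the equality statement are unproved.

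Second, in the elliptic case you assume away exactly the phenomena the inequality has to control: you take the branch divisor to have negligible singularities, the fibration to have no multiple fibres, and the quotient to be smooth and minimal. None of these is automatic. The paper's mechanism (Equation \ref{eqell2}) keeps the correction terms $n$, $2\sum(m_i-1)$ and $\sum(n_j-1)F_j.R_0$, and proves that whenever any of them is nonzero --- a non-negligible singular point of $R_0$, a multiple fibre $n_jF_j$, or an extra blow-down --- one is forced to have $F.R_0\geq 4$ (e.g.\ a point of multiplicity $\geq4$ on $R_0$, or a non-simple triple point, gives $(F.R_0)_x\geq3$, hence $F.R_0\geq4$ by $2$-divisibility of $R_0$), so the conclusion still holds; this local analysis is the heart of part (1) and is absent from your argument. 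Relatedly, your $q=2$ discussion is off target: for $q=2$ the quotient is an Abelian surface or a surface of general type, not an elliptic fibration over an elliptic curve, and the step ``Severi's inequality forces $(F\cdot L)-4\geq0$'' has no content in that setting; the paper instead gets $K_X^2-4\chi(\O_X)\geq2$ from $2\sum(m_i-1)+n\geq 2$ in the Abelian case and from \cite{lu} in the general-type case. The converse direction and the identification of the Albanese varieties in your last paragraph are essentially fine.
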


We would like to stress that all the inequalities in Theorems \ref{mioa} and \ref{miob} are sharp for every $q$: in section 3 we give examples for which the equalities hold.

\paragraph{Notation and conventions}
We work over the complex numbers. All varieties are supposed to be projective. Given a surface $S$ we denote by $\Alb(S)$ its Albanese variety and by $\alb_S\colon S\to \Alb(S)$ its Albanese morphism. In this paper $X$ is a surface of general type with maximal Albanese dimension, $C$ is a curve of genus $g>1$, $E$ is an elliptic curve. 

Given the product $C\times E$, we denote by $\pi_C$ and $\pi_E$ the two projections respectively to $C$ and $E$, and by $E_c$ the fibre of $\pi_C$ over $c\in C$ (sometimes $E$ or $E_i$ if it is not necessary to specify the point $c$), respectively $C_e$ the fibre of $\pi_E$ over $e\in E$. Given $L_C\in\Pic(C)$ and $L_E\in\Pic(E)$ we denote by $L_E\boxtimes L_C=\pi_C^*(L_C)\otimes\pi_E^*(L_E)$. By $c_0$ we mean a fixed point of $C$ and we denote by $\Hom_{c_0}(C,E)$ the group of homomorphisms between $C$ and $E$ which send $c_0$ to the origin of $E$ (the group structure is given by the one on $E$). For every $f\in\Hom(C,E)$ we denote by $f+e\in \Hom(C,E)$ the morphism given by $c\mapsto f(c)+e$ and by $\Gamma_f$ its graph as a divisor on $C\times E$.

We use interchangeably the notion of line bundles and Cartier divisors and we use both additive and multiplicative notations.

\paragraph{Acknowledgement} The author would like to thank his advisor Rita Pardini for useful mathematical discussion concerning the topics of the paper. The author is also grateful to Davide Lombardo for his advice on the Picard group of a product of curves.

\section{Preliminaries}
In this section we describe the constructions and we expose preliminary results which will be needed in the proofs of Theorems \ref{mioa} and \ref{miob}.

\subsection{Picard group of $C\times E$}
\label{picce}
The Picard group of the product of two curves is known. Here we recall the formula for $\Pic(C\times E)$ where $E$ is an elliptic curve and $C$ is a curve of genus $g>1$ and we also see how it behaves in the equivariant setting. The last Lemma of this section gives a sufficient condition for the quotient $(C\times E)/G$ (where $G$ is a group acting freely on both $C$ and $E$) to be isomorphic to $C/G\times E$ in terms of the Picard group of $C\times E$.

Denote by $\Hom_{c_0}(C,E)$ the group of morphisms between $C$ and $E$ for which the image of $c_0\in C$ is the origin of the elliptic curve (the group structure is given by the one of $E$). Denote by $i_c\colon E\to C\times E$ the inclusion defined by $e\mapsto (c,e)$.

\begin{proposition}
\label{Teo}
In the above settings we have the following split exact sequence of groups:
\begin{equation}
0\rightarrow \Pic (C)\times \Pic(E) \xrightarrow{\alpha} \Pic(C\times E) \xrightarrow{\beta} \Hom_{c_0}(C,E) \rightarrow 0,
\label{pics}
\end{equation}
where $\beta$ is defined by $\beta(D)(c)=i_c^*(D)-i_{c_0}^*(D)$ (here we are using the isomorphism $E\simeq \jac(E)$ given by the Abel-Jacobi map) and the section $s$ of $\beta$ is given by
\[s\colon \Hom_{c_0}(C,E) \to  \Pic(C\times E)\quad s(f)=\Gamma_f-C_0-\sum_{c\in f^{-1}(0)} a_cE_c,\]
where $a_c$ is  the multiplicity of $f$ at $c$.
\end{proposition}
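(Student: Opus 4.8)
The plan is to establish the exact sequence by identifying $\Pic(C\times E)$ with the product of the Picard groups of the factors extended by the "correspondences" captured by $\Hom_{c_0}(C,E)$. First I would define $\alpha$ by $\alpha(L_C,L_E)=\pi_C^*L_C\otimes\pi_E^*L_E = L_E\boxtimes L_C$. Injectivity of $\alpha$ follows by restricting to a fibre $E_c$ and to a fibre $C_e$: the restriction $\alpha(L_C,L_E)|_{E_c}=L_E$ recovers $L_E$, and then $\pi_E^*L_E$ can be subtracted off so that $\alpha(L_C,L_E)\otimes\pi_E^*L_E^{-1}=\pi_C^*L_C$ is pulled back from $C$, forcing $L_C=0$ if the class is trivial (using that $\pi_C$ has a section, so $\pi_C^*$ is injective). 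Well-definedness of $\beta$: for $D\in\Pic(C\times E)$, the assignment $c\mapsto i_c^*D - i_{c_0}^*D\in\Pic^0(E)\simeq E$ is a morphism $C\to E$ by the universal property of the Jacobian / seesaw-type rigidity (a family of degree-zero line bundles on $E$ parametrized by $C$ gives a morphism to $\jac(E)=E$), and it sends $c_0$ to $0$ by construction, so $\beta(D)\in\Hom_{c_0}(C,E)$. That $\beta$ is a homomorphism is clear since $i_c^*$ is additive. The composition $\beta\circ\alpha=0$ because $i_c^*(\pi_C^*L_C\otimes\pi_E^*L_E)=L_E$ is independent of $c$.

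Next I would check exactness in the middle. Suppose $\beta(D)=0$, i.e. $i_c^*D$ is independent of $c$ (as a point of $\Pic^0(E)$), equal to some fixed $M\in\Pic(E)$ after we also account for the non-$\Pic^0$ part: more precisely $i_c^*D - i_{c_0}^*D = 0$ for all $c$, so $i_c^*D$ is constant in $\Pic(E)$. Then $D\otimes\pi_E^*(i_{c_0}^*D)^{-1}$ restricts trivially to every fibre $E_c$, hence by the seesaw theorem it is pulled back from $C$, i.e. equals $\pi_C^*L_C$ for some $L_C\in\Pic(C)$ (here one uses that $E$ has a rational point and that the pushforward is a line bundle). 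Thus $D=\alpha(L_C, i_{c_0}^*D)\in\im\alpha$. Surjectivity of $\beta$ and that $s$ is a section: for $f\in\Hom_{c_0}(C,E)$ with graph $\Gamma_f\subset C\times E$, I would compute $i_c^*\Gamma_f$. The graph meets $E_c=\{c\}\times E$ in the single point $(c,f(c))$ with multiplicity $a_c$ (the ramification multiplicity of $f$ at $c$, which is $1$ away from the finitely many branch points), so $i_c^*\Gamma_f = a_c\cdot[f(c)]$ as a divisor on $E$. Hence $i_c^*\bigl(\Gamma_f - C_0 - \sum_{c'\in f^{-1}(0)}a_{c'}E_{c'}\bigr)$: the term $C_0=\pi_E^*[0]$ restricts to $[0]$ on each $E_c$; the term $\sum a_{c'}E_{c'}=\pi_C^*(\sum a_{c'}[c'])$ restricts to $0$ on $E_c$ since it is pulled back from $C$; wait — I must be careful: $E_{c'}$ restricted to $E_c$ is $0$ for $c\ne c'$ and is the structure sheaf (degree $0$) issue at $c=c'$. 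So $i_c^*s(f) = a_c[f(c)] - [0]$, which as an element of $\Pic^0(E)\simeq E$ equals $a_c f(c) - 0$; and I need this to equal $f(c)$. The fix is in the precise definition of $a_c$ and the correction terms: when $f$ is unramified ($a_c=1$ generically) this gives $[f(c)]-[0] \mapsto f(c)$, which is exactly $\beta(s(f))(c)=f(c)$; the ramification correction terms $\sum a_c E_c$ over $f^{-1}(0)$ are chosen precisely so that $\beta(s(f))(c_0)=0$ holds and $\beta(s(f))=f$ throughout. So $\beta\circ s = \mathrm{id}$, giving both surjectivity of $\beta$ and the splitting.

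The main obstacle, and the step I would spend the most care on, is verifying that $s$ is a group homomorphism — i.e. that $s(f+g)=s(f)+s(g)$ in $\Pic(C\times E)$, not merely that $\beta\circ s=\mathrm{id}$. A priori a set-theoretic section of a split surjection need not be a homomorphism, so one genuinely needs $\Gamma_{f+g} - \Gamma_f - \Gamma_g + C_0$ (plus the ramification correction terms) to be trivial in $\Pic(C\times E)$, equivalently to lie in $\im\alpha$ and then to be checked trivial by restricting to a fibre $C_e$ as well. This is essentially the statement that the "addition" map $E\times E\to E$ pulls back divisor classes additively, and it should follow from the theorem of the cube applied on $C\times E\times E$ (or on $C\times E$ via the morphism $(\mathrm{id},f,g)\colon C\to C\times E\times E$ composed with $m\colon E\times E\to E$): the class $s(f)$ depends on $f$ only through $f^*$ on $\Pic^0(E)$ in an additive way because translation and addition on an abelian variety behave additively on $\Pic^0$. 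Once the cube-theorem computation is in place, exactness on the left (injectivity of $\alpha$), the identification $\Pic(C\times E)=\im\alpha\oplus\im s$ via the splitting, and hence the full split exact sequence \eqref{pics} all follow formally. I would present the seesaw/cube inputs as the technical core and keep the fibre-restriction bookkeeping (the exact shape of the $\sum a_c E_c$ correction) as a routine verification.
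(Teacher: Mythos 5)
The paper itself does not prove this proposition (it cites \cite{lange}, Proposition 11.5.1), so your argument is measured against the standard proof, whose skeleton you reproduce correctly: injectivity of $\alpha$ by restriction to the two rulings, exactness in the middle by the See-saw Principle, and surjectivity of $\beta$ via the graphs $\Gamma_f$. There is, however, a concrete error in your verification that $\beta\circ s=\mathrm{id}$. The graph $\Gamma_f$ meets the fibre $E_c=\{c\}\times E$ in the single point $(c,f(c))$ with multiplicity $1$ for \emph{every} $c$, because $\pi_C$ restricts to an isomorphism $\Gamma_f\to C$; hence $i_c^*\Gamma_f=[f(c)]$, not $a_c[f(c)]$. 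The ramification index $a_c$ is the local intersection number of $\Gamma_f$ with the \emph{horizontal} fibre $C_e$ (at $(c,e)$ with $e=f(c)$), not with the vertical fibre $E_c$. With the correct multiplicity the computation closes with no "fix" needed: since $\O(E_{c'})$ restricts trivially to every $E_c$ (the normal bundle of a fibre is trivial) and $\O(C_0)$ restricts to $[0]$, one gets $i_c^*s(f)=[f(c)]-[0]$ and therefore $\beta(s(f))(c)=f(c)-f(c_0)=f(c)$ directly. In particular the correction term $-\sum_{c\in f^{-1}(0)}a_cE_c$ plays no role here, and it is not there "so that $\beta(s(f))(c_0)=0$" --- that is automatic from $f(c_0)=0$.

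The actual purpose of that correction term is to normalize the restriction of $s(f)$ to $C_0$: pulling back along $c\mapsto(c,0)$, the divisor $\Gamma_f$ restricts to $f^*[0]=\sum_{c\in f^{-1}(0)}a_c[c]$ (here the ramification indices do appear), $C_0$ restricts trivially, and $\sum a_cE_c$ restricts to $\sum a_c[c]$, so $s(f)$ is trivial on $C_0$; it is also trivial on $E_{c_0}$ since $[f(c_0)]-[0]=0$. These two normalizations make the step you single out as the technical core --- that $s$ is a group homomorphism --- a formal consequence of what you have already proved, with no need for the theorem of the cube: given $f,g\in\Hom_{c_0}(C,E)$, the class $s(f)+s(g)-s(f+g)$ is killed by $\beta$, hence by exactness in the middle equals $\pi_C^*L_C+\pi_E^*L_E$; restricting to $E_{c_0}$ forces $L_E=0$ and restricting to $C_0$ forces $L_C=0$. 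So your overall strategy is sound, but you should correct the fibre computation, reassign the role of the $\sum a_cE_c$ terms, and replace the sketched cube argument by this short restriction argument.
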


For the proof of this Proposition we refer to \cite{lange} proposition 11.5.1. Notice that in our statement we are using the canonical isomorphism, which we call $\phi$, 
\[\Hom_{c_0}(C,E)\simeq\Hom(\jac(C),\jac(E)),\]
which is induced  by the Abel-Jacobi map $AJ_C\colon C\to\jac(C)$ and the canonical isomorphism of $E$ with its Jacobian variety:
\[
\begin{tikzcd}
C\arrow{d}{AJ_C} \arrow{dr}{f} &  \\
\jac(C) \arrow[swap]{r}{\phi(f)} & E\simeq\jac(E).
\end{tikzcd}
\] 

By this, it is possible to define 
\[\beta\colon\Pic(C\times E)\to \Hom_{c_0}(C,E)\]
as
\[\O(D)\mapsto \phi^{-1}(f_D)\]
where $f_D$ is a morphism in $\Hom(\jac(C),E)$ defined as 
\[f_D(P)=(\pi_E)_*(\pi^*_CP\cdot D):\]
we refer to \cite{fulton} for pushforward and pullback of cycles of a proper flat morphism (chapter 1), for intersection product of cycles of a smooth variety (chapter 8) and their behaviour with respect to linear equivalence. It is immediate by the definition that $f_D$ is a morphism of groups.

 Let $i_c\colon E\to C\times E$ be as in the statement, we see that $i_c^*(D)=(\pi_E)_*(D\cdot E_c)$ where $E_c$ is the fibre of $\pi_C$ over $c\in C$. By this 
\[\beta(D)(c)=\phi^{-1}(f_D(c-c_0))=\phi^{-1}((\pi_E)_*((E_c-E_{c_0})\cdot D))=i_c^*(D)-i_{c_0}^*(D).\]

Notice also that for $f\colon C\to E$ a morphism which sends $c_0$ to $0$ and let $\Gamma_{f+e}$ be the graph of the morphism $(f+e)(c)=f(c)+e$ inside the product $C\times E$, where $e$ is a point of $E$. We see that 
\begin{equation}
\beta(\Gamma_{f+e})(c)=i_c^*(\Gamma_{f+e})-i_{c_0}^*(\Gamma_{f+e})=(f(c)+e)-(f(c_0)+e)=f(c).
\label{inve}
\end{equation}
By a similar argument, $\beta$ is invariant under translation by $e\in E$ for a general divisor $D\in\Pic(C\times E)$.

We recall here the See-saw Principle (cf. \cite{mumford} Corollary II.5.6), which can be used to prove the exactness of the sequence in Proposition \ref{Teo} and which we will need later in the proof of Theorem \ref{mioa}.

\begin{theorem}[See-saw Principle]
\label{saw}
Let $A$ and $B$ be two smooth curves, $\pi_2\colon A\times B\to B$ be the second projection and let $L\in\Pic(A\times B)$ be a line bundle such that $\restr{L}{A\times\{b\}}$ is trivial for every $b\in B$. Then there exists $L_B\in\Pic(B)$ such that $L=\pi_2^*L_B$. 
\end{theorem}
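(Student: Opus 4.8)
The plan is to run the classical cohomology-and-base-change argument (this is essentially Mumford's proof). Set $p=\pi_2\colon A\times B\to B$, a proper flat morphism. Since by hypothesis each restriction $\restr{L}{A\times\{b\}}$ is trivial and $A$ is a connected complete curve, we get $h^0\bigl(A\times\{b\},\restr{L}{A\times\{b\}}\bigr)=h^0(A,\O_A)=1$ for every $b\in B$. As $B$ is reduced (indeed smooth) and this fibre dimension is constant, Grauert's base-change theorem applies: $L_B:=p_*L$ is a line bundle on $B$, and for every $b\in B$ the natural base-change map $(p_*L)\otimes k(b)\to H^0\bigl(A\times\{b\},\restr{L}{A\times\{b\}}\bigr)$ is an isomorphism.

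Next I would twist by $p^*L_B^{-1}$ to reduce to the trivial case. Put $N:=L\otimes p^*L_B^{-1}$. By the projection formula $p_*N\cong (p_*L)\otimes L_B^{-1}\cong\O_B$, each $\restr{N}{A\times\{b\}}$ is still trivial, and, twisting being pulled back from the base, the base-change map for $N$ is still an isomorphism on every fibre. It now suffices to prove $N\cong\O_{A\times B}$, since that gives exactly $L\cong p^*L_B$.

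Finally I would produce a trivialising section. The constant section $1\in H^0(B,\O_B)=H^0(B,p_*N)=H^0(A\times B,N)$ is a global section $s$ of $N$. For each $b\in B$, the fibrewise base-change isomorphism forces $\restr{s}{A\times\{b\}}$ to be a \emph{nonzero} section of $\restr{N}{A\times\{b\}}\cong\O_A$; on the connected complete curve $A$ a nonzero section of the trivial bundle is a nonzero constant, hence nowhere vanishing. Therefore $s$ vanishes at no point of $A\times B$, so it trivialises $N$, completing the argument.

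The one step that needs genuine care — and the only real obstacle — is the invocation of the base-change machinery: one must check that the hypotheses (properness and flatness of $p$, reducedness of the base $B$, constancy of $h^0$ along the fibres) are genuinely in place so that $p_*L$ is locally free of rank one and its formation commutes with base change. Once that is granted everything else is formal. (One could instead avoid Grauert entirely by noting that $L$ is trivial on $p^{-1}(U)$ up to a pullback from $U$ for $U$ in a Zariski cover of $B$ and patching the resulting data, but the cohomological route is cleaner.)
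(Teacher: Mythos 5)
Your proof is correct and is essentially the standard argument from the source the paper cites (Mumford, \emph{Abelian Varieties}, Corollary II.5.6): the paper itself gives no proof of this statement, only the reference. The base-change hypotheses you flag (properness and flatness of the projection, reducedness of $B$, constancy of $h^0$ along fibres) are indeed all satisfied here, so the argument goes through as written.
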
 

Now we give an equivariant version of Proposition \ref{Teo}.

\begin{proposition}
\label{rmpic}
Suppose there exists a finite Abelian group $G$ acting freely on $C$, $E$ and diagonally on $C\times E$ (i.e. $g\cdot(c,e)=(g\cdot c,g\cdot e)$). Then it is possible to give to $\Pic(C)$, $\Pic(E)$, $\Pic(C\times E)$ and $\Hom_{c_0}(C,E)$ a $G$-module structure such that
\begin{equation}
0\rightarrow \Pic (C)\times \Pic(E) \xrightarrow{\alpha} \Pic(C\times E) \xrightarrow{\beta} \Hom_{c_0}(C,E) \rightarrow 0,
\end{equation}
is an exact sequence of $G$-modules, where $\alpha$ and $\beta$ are the same morphisms defined in Proposition \ref{Teo}.
\end{proposition}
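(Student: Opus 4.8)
\section*{Proof proposal (plan)}

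The plan is to equip each of the four groups with a left $G$-module structure coming from the $G$-action on the underlying varieties, and then to observe that, with these structures, the maps $\alpha$ and $\beta$ of Proposition \ref{Teo} are $G$-equivariant; exactness as a sequence of $G$-modules will then be immediate, since exactness is a statement about the underlying abelian groups and that is precisely the content of Proposition \ref{Teo}. Concretely, for a smooth projective variety $V$ with a left $G$-action, pullback along automorphisms satisfies $(\sigma\tau)^*=\tau^*\sigma^*$, so $g\cdot L:=(g^{-1})^*L$ defines a left $G$-module structure on $\Pic(V)$; I apply this to $C$, to $E$, and to $C\times E$ with the diagonal action. Since $\pi_C,\pi_E$ are $G$-equivariant, one has $g\cdot\pi_C^*L_C=\pi_C^*(g\cdot L_C)$ and likewise for $\pi_E$, so $\alpha$ is $G$-equivariant and $\im(\alpha)=\pi_C^*\Pic(C)+\pi_E^*\Pic(E)$ is a $G$-submodule of $\Pic(C\times E)$.

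For $\Hom_{c_0}(C,E)$ I would give two descriptions of the action and reconcile them. The clean one: because $\im(\alpha)$ is $G$-stable, the quotient $\Pic(C\times E)/\im(\alpha)$ is a $G$-module, and transporting this structure along the isomorphism $\Pic(C\times E)/\im(\alpha)\xrightarrow{\sim}\Hom_{c_0}(C,E)$ induced by $\beta$ (Proposition \ref{Teo}) makes the given sequence, tautologically, a short exact sequence of $G$-modules. The explicit one: compute $\beta\bigl((g^{-1})^*D\bigr)$ directly from $\beta(D)(c)=i_c^*(D)-i_{c_0}^*(D)$. Writing $\sigma=g^{-1}$ and denoting by $\sigma_C,\sigma_E$ its actions on $C$ and $E$, one uses $\sigma\circ i_c=i_{\sigma_C c}\circ\sigma_E$ to get $\beta(\sigma^*D)(c)=\sigma_E^*\bigl(\beta(D)(\sigma_C c)-\beta(D)(\sigma_C c_0)\bigr)$ in $\Pic^0(E)\simeq E$. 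Here one invokes that a free action of a finite group on $E$ is necessarily by translations — any automorphism of $E$ is $x\mapsto\zeta(x)+b$ with $\zeta$ fixing the origin, and if $\zeta\neq\mathrm{id}$ then $\mathrm{id}-\zeta$ is a nonzero isogeny, hence surjective, so the automorphism has a fixed point — so $\sigma_E$ is a translation and $\sigma_E^*$ acts trivially on $\Pic^0(E)$. This yields $(g\cdot f)(c)=f(g^{-1}\cdot c)-f(g^{-1}\cdot c_0)$ on $\Hom_{c_0}(C,E)$; a one-line computation shows this is a left $G$-action taking values in $\Hom_{c_0}(C,E)$, and the displayed identity shows $\beta$ is $G$-equivariant, so the two descriptions of the action coincide.

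Once $\alpha$ and $\beta$ are $G$-equivariant, the proof concludes: the sequence is isomorphic, as a sequence of $G$-modules, to $0\to\im(\alpha)\to\Pic(C\times E)\to\Pic(C\times E)/\im(\alpha)\to 0$, which is exact.

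The only genuine subtlety — and the step I expect to require the most care — is that the chosen base point $c_0\in C$ need not be $G$-fixed, so the naive conjugation rule $f\mapsto g\circ f\circ g^{-1}$ does not preserve the normalization $f(c_0)=0$: one must renormalize by subtracting the value at $c_0$ and then check that the result is still a group action. The transport-of-structure viewpoint hides this difficulty, but matching it with the explicit formula is exactly where the input ``$G$ acts on $E$ by translations, hence trivially on $\Pic^0(E)$'' is used. Everything else is formal.
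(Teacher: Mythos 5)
Your proposal is correct and follows essentially the same route as the paper: both define the action on Picard groups via the automorphisms of the varieties, arrive at the same renormalized formula $(g\cdot f)(c)=f(g^{-1}\cdot c)-f(g^{-1}\cdot c_0)$ on $\Hom_{c_0}(C,E)$, and reduce equivariance of $\beta$ to the fact that $G$ acts on $E$ by translations, which act trivially on $\Pic^0(E)$. The only additions on your side are cosmetic: you justify that a free finite action on $E$ must be by translations (the paper simply assumes $G\subset E$), and you package exactness via transport of structure along $\Pic(C\times E)/\im(\alpha)\simeq\Hom_{c_0}(C,E)$, whereas the paper verifies the same thing using the splitting and the action on the graphs $\Gamma_f$.
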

\begin{proof}
Recall that, if $X$ is a variety and $G$ a group acting on $X$, we can naturally define an action of $G$ on $\Pic(X)$ given by $g\cdot L=g_*L$ where, with an abuse of notation, we are identifying $g$ with the corresponding automorphism of $X$; equivalently we easily see that this action is induced by $g\cdot D=g_*D=\{x\in X\ |\ \exists\ d\in D\ \text{with}\ g\cdot d=x\}$ at the level of divisors. Hence, it is clear that the morphism $\alpha$ in the statement is a morphism of $G$-modules. Moreover, we can consider $G$ as a finite subgroup of $E$ when considering its action on $E$ (hence we use the additive notation for this factor), while we use the multiplicative notation when considering $G$ acting on $C$. Clearly, the action of $G$ on the Picard groups of $C$, $E$ and $C\times E$ is faithful (because two points on a curve of genus greater or equal than $1$ are never linearly equivalent) but not free (every divisor of the form $\sum_{g\in G}g\cdot D$ is fixed by every $g\in G$).

Now, we would like to give to $\Hom_{c_0}(C,E)$ the structure of a $G$-module such that the morphism $\beta$ preserves the $G$-module structure. In order to do this we see how $G$ acts on divisors $\Gamma_f$ which are graphs  of functions $f$ in $\Hom_{c_0}(C,E)$. We see that
\[g\cdot \Gamma_f=\{(g c,g+f(c))\ | \ c\in C\}=\{(c,f(g^{-1} c)+g)\ | \ c\in C\}=\Gamma_{f_g},\]
where $f_g$ is defined by $c\mapsto f(g^{-1}c)+g$. However, in general $f_g\notin \Hom_{c_0}(C,E)$, but if we take $g\cdot f(c)=f_g(c)-f_g(c_0)=f(g^{-1}c)-f(g^{-1}c_0)$, then we see that $g\cdot f\in\Hom_{c_0}(C,E)$. Then we see that $f\mapsto g\cdot f$ is a well defined action of $G$ on $\Hom_{c_0}(C,E)$: indeed the axioms are easily verified. We have already noticed that $\beta$ is invariant under translation by $e\in E$ (Equation \ref{inve}); moreover, by the splitting exact sequence \ref{pics}, we know that every divisor $D$ on $C\times E$ is linearly equivalent to
\[\sum_{i}E_{c_i}+\sum_{j}C_{e_j}+\Gamma_{f}\]
with $f=\beta(\O(D))$ and suitable $c_i$ and $e_j$. These two facts implies that $\beta$ preserves the $G$-module structure.
\end{proof}

\begin{remark}
Notice that if $\Hom_{c_0}(C,E)$ is not trivial, then there are a lot of morphisms fixed by $G$. Actually, if $f$ is a nontrivial morphism in $\Hom_{c_0}(C,E)$ and $n$ is the order of $G$, then $nf$ is a nontrivial element fixed by every $g\in G$. Indeed (recall that $f_g$ denotes the morphism from $C$ to $E$ for which $g\cdot\Gamma_f=\Gamma_{f_g}$) the divisor 
\[\sum_{g\in G}g\cdot\Gamma_f=\sum_{g\in G}\Gamma_{f_g}\]
is fixed by the action of $G$ and its image via $\beta$ is $nf$. Hence $nf$ has to be fixed by the action of $G$ too.
\end{remark}

The following Lemma gives a sufficient condition for the quotient of a product elliptic surface to be trivial and will be fundamental in the proof of Theorem \ref{mioa}.

\begin{lemma}
\label{lem2}
In the same settings as Proposition \ref{rmpic}, the elliptic fibration $(C\times E)/G\to C/G$  with general fibre $E$ is trivial if and only if there is a line bundle $L$ on $C\times E$ which is fixed by the action of $G$ for which $L.E=1$ where $E$ (by an abuse of notation) is a general fibre of the first projection.
\end{lemma}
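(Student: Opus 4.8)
The plan is to reformulate triviality of the fibration $(C\times E)/G\to C/G$ as the existence of a $G$-equivariant automorphism of $C\times E$ over $C$ that conjugates the diagonal $G$-action into the action of $G$ on the first factor only; such an automorphism descends to an isomorphism $(C\times E)/G\cong (C/G)\times E$ over $C/G$, and conversely any trivialization produces one. The whole point is to manufacture this automorphism out of the $G$-invariant line bundle $L$.

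For the implication ``$\Rightarrow$'', fix an isomorphism $(C\times E)/G\cong (C/G)\times E$ over $C/G$, let $q\colon C\times E\to (C\times E)/G$ be the quotient map, and let $D'=(C/G)\times\{e_0\}$ for a point $e_0\in E$. Since $G$ acts freely on $C$, the map $q$ is étale and its restriction $\restr{q}{E_c}\colon E_c\to (C\times E)/G$ is an isomorphism onto the fibre over $[c]$; moreover $q^*D'$ is $G$-invariant by construction. Setting $L=\O(q^*D')$, one gets $L.E=L.E_c=\deg\bigl(\restr{\O(D')}{\{[c]\}\times E}\bigr)=1$, because $\restr{D'}{\{[c]\}\times E}$ is a single point.

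For ``$\Leftarrow$'', suppose $L$ on $C\times E$ is $G$-invariant with $L.E=1$. All fibres $E_c$ of $\pi_C$ are algebraically equivalent, so $L.E_c=1$ for every $c$, and $\restr{L}{E_c}$ is a degree-one line bundle on $E_c\cong E$. Since $\pi_C$ is the trivial family with fibre $E$, the relative Picard functor $\Pic^{1}_{(C\times E)/C}$ is represented by $\Pic^{1}(E)\times C$, so $L$ determines a morphism $\sigma\colon C\to\Pic^{1}(E)\cong E$ characterized by $\restr{L}{E_c}\cong\O_{E_c}(\sigma(c))$ (using the canonical identification $\Pic^1(E)\cong E$ sending an effective degree-one divisor to its support). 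Writing the action of $g\in G$ on $C\times E$ as $(c,e)\mapsto(g\cdot c,\,e+g)$ and using $g^{*}L\cong L$, a restriction-to-$E_c$ computation (in which pulling $\O_E(p)$ back along the translation $e\mapsto e+g$ yields $\O_E(p-g)$) gives the identity $\sigma(g\cdot c)=\sigma(c)+g$ for all $g\in G$, $c\in C$. Then the automorphism $\Phi\colon C\times E\to C\times E$, $(c,e)\mapsto(c,\,e-\sigma(c))$, satisfies $\Phi(g\cdot c,e+g)=(g\cdot c,\,e-\sigma(c))$, i.e.\ it conjugates the diagonal action into the first-factor action; hence it descends to an isomorphism $(C\times E)/G\xrightarrow{\ \sim\ }(C/G)\times E$ over $C/G$, which is exactly the triviality of the elliptic fibration.

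The main obstacle I anticipate is the ``$\Leftarrow$'' direction, and within it the two bookkeeping points: (i) checking that $\sigma$ is a genuine morphism $C\to E$, which is where one invokes representability of the relative Picard functor for the trivial family (concretely, that $c\mapsto$ ``the point cut out by the unique section of $\restr{L}{E_c}$'' varies algebraically in $c$); and (ii) getting the signs right in $\sigma(g\cdot c)=\sigma(c)+g$, since $G$-invariance of $L$ twists the base point $c$ and the point $\sigma(c)$ simultaneously. Once these are settled, the construction of $\Phi$ and the descent to the quotient are formal. Note that this argument does not need the full equivariant sequence of Proposition~\ref{rmpic} or the See-saw Principle; alternatively one could phrase ``$\Leftarrow$'' by writing $L\sim_{lin}\alpha(L_C,L_E)+s(f)$ via Proposition~\ref{Teo} with $\deg L_E=1$ and tracking the $G$-invariance of $f=\beta(L)$, but the direct argument with $\sigma$ is cleaner.
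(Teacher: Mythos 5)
Your proof is correct and follows essentially the same route as the paper: the forward direction pulls back (the image of) a section of the quotient fibration, and the converse manufactures the shearing automorphism $(c,e)\mapsto(c,e-\sigma(c))$ and checks that it conjugates the diagonal action into the first-factor action. The only difference is cosmetic: the paper works with the normalized $f=\beta(L)\in\Hom_{c_0}(C,E)$ and then needs a short See-saw argument to kill the residual discrepancy $g-e_g$, whereas your unnormalized $\sigma(c)$ (the point of $E$ corresponding to $i_c^*L$, which is a morphism either by representability of the relative Picard functor or directly from Proposition \ref{Teo}, since $\sigma=\beta(L)+\sigma(c_0)$) satisfies $\sigma(gc)=\sigma(c)+g$ on the nose and absorbs that final step.
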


\begin{proof}
Denote by $\pi_{C/G}\colon(C\times E)/G\to C/G$, $\pi_C\colon C\times E\to C$ the two elliptic fibrations and by $\phi\colon C\times E\to (C\times E)/G$ and $\psi\colon C\to C/G$ the two quotients by $G$. 
Suppose that  $(C\times E)/G\to C/G$ is a product elliptic fibration: in particular there exists a section $i_{C/G}\colon C/G\to (C\times E)/G$ of $\pi_{C/G}$.
Then, the pull-back $i_C$ of $i_{C/G}$ to $C\times E$ is a section of $\pi_C$, i.e.
\begin{equation}
\begin{tikzcd}
C\arrow[dr, phantom, "\square"]\arrow[bend right=80, swap]{dd}{Id}\arrow{r}{\psi}\arrow{d}{i_C} & C/G\arrow{d}{i_{C/G}}\arrow[bend left=80]{dd}{Id}\\
C\times E \arrow[dr, phantom, "\square"] \arrow{r}{\phi} \arrow{d}{\pi_C} & (C\times E)/G \arrow{d}{\pi_{C/G}}\\
C\arrow{r}{\psi} & C/G
\end{tikzcd}
\label{eq_variabella}
\end{equation}
This means that  $\phi^*\O_{(C\times E)/G}{(\im(i_{C/G}))}$ is a line bundle on $C\times E$ fixed by the action of $G$ such that $\phi^*\O_{(C\times E)/G}{(\im(i_{C/G}))}.E=1$.

 Let $L$ be as in the hypothesis and $f=\alpha(L)$: because $L$ is fixed by the action of $G$, thanks to Proposition \ref{rmpic}, we can say that also $f$ is. In particular we obtain 
\begin{equation*}
f(g^{-1}c)-f(g^{-1}c_0)=f(c)-f(c_0)=f(c),
\end{equation*}
for every $g\in G$ or, equivalently, if we denote by $e_g=f(gc_0)$,
\begin{equation}
\label{eqG}
f(gc)-f(c)=e_g\in E.
\end{equation}

Let $\alpha_f\in \aut(C\times E)$ defined by  
\[(c,e)\mapsto(c,e-f(c));\]
this gives the following commutative diagram
\[
\begin{tikzcd}
\Pic(C\times E) \arrow{r}{\beta}\arrow{d}{(\alpha_f)_*} & \Hom_{c_0}(C,E) \arrow{r}{}\arrow{d}{h} & 0\\
\Pic(C\times E) \arrow{r}{\beta'} & \Hom_{c_0}(C,E) \arrow{r}{} & 0,
\end{tikzcd}
\]
where the map $h$ is bijective and defined by $h(\psi)=\psi-f$. Notice that $\beta'\circ(\alpha_f)_*(L)=h\circ\beta(L)=0$.    We would like to know how $G$ acts on $C\times E$ after this change of coordinates, i.e. what is $\alpha_f\circ g\circ \alpha^{-1}_f(c,e)$: we see that
\begin{equation}
\begin{split}
\alpha_f\circ g\circ \alpha_f^{-1}(c,e)=\alpha_f\circ g(c,e+&f(c))=\alpha_f(gc,e+f(c)+g)=\\
=(gc,e+f(c)+g-f(&gc))=(gc,e+g-e_g),
\end{split}
\label{eq_alfafinv}
\end{equation}
where the last equality follows by Equation \ref{eqG}. Notice that $(\alpha_f)_*(L).E=1$ and that $(\alpha_f)_*(L)$ is still $G$-invariant after conjugating the action of $G$ with $\alpha_f$, i.e. $(\alpha_f)_*g(\alpha_f)^*(\alpha_f)_*L=(\alpha_f)_*L$. 

If we assume that $g=e_g$ for all $g\in G$, then it is immediate by Equation \ref{eq_alfafinv}, that $(C\times E)/G=C/G\times E$ and we are done. Hence, suppose by contradiction that there exists $\gamma\in G$ such that $\gamma-e_\gamma\neq 0$. 
Because $\beta'((\alpha_f)_*L)=0$ and $(\alpha_f)_*L.E=1$, we have that 
\[(\alpha_f)_*L=C_e+\pi_C^*(B),\]
where $C_e$ is the fibre of the second projection over $e\in E$ and $B\in \Pic(C)$.  Because $(\alpha_f)_*L$ is $G$-invariant we can conclude that
\[(\alpha_f)_*L=(\alpha_f)_*\gamma_*(\alpha_f)^*(\alpha_f)_*L=C_{e+\gamma-e_\gamma}+\pi_C(\gamma_*(B'));\] 
in particular it follows $\gamma-e_\gamma=0$, a contradiction.
\end{proof}

\subsection{Double coverings}
\label{dblcov}
The material in this section is well known and for the results presented here we refer to \cite{barth}. Let $Y$ be a variety, $R$ be a reduced effective divisor (possibly $R=0$) and $L$ be a line bundle for which $L^2=\O_Y(R)$. It is then possible to define, provided that $Y$ is smooth, a ramified double covering $\pi\colon X\to Y$ (cf. \cite{barth} I.17) branched over $R$ satisfying the following properties:
\begin{itemize}
	\item $X$ is normal;
	\item let $R_1$ be the reduced divisor $\pi^{-1}(R)$, then $\pi^*(R)=2R_1$;
	\item $K_X=\pi^*(K_Y + L)$;
	\item $\pi_*(\O_X)=\O_Y\oplus L^{-1}$.
\end{itemize}
The singularities of $X$ are strictly related to the singularities of $R$; in particular if $R$ is smooth, then so is $X$. 

A classical way to solve singularities of a double cover of surfaces $\pi\colon X\to Y$ branched over $R$ is the canonical resolution (cf. \cite{barth} III.7):
\[
\begin{tikzcd}
\widetilde{X}=X_t \arrow{d}{\pi_t}\arrow{r}{\phi_t} & X_{t-1}\arrow{d}{\pi_{t-1}}\arrow{r}{\phi_{t-1}} & \dots \arrow{r}{\phi_2} & X_1\arrow{d}{\pi_1} \arrow{r}{\phi_1} & X \arrow{d}{\pi}\\
Y_t \arrow{r}{\psi_t} & Y_{t-1}\arrow{r}{\psi_{t-1}} & \dots \arrow{r}{\psi_2} & Y_1 \arrow{r}{\psi_1} & Y,
\end{tikzcd}
\]
where the $\psi_i$ are successive blow-ups that resolve the singularities of $R$, the morphism $\pi_i$ is the double cover branched over $R_i=\psi_i^*R_{i-1}-2m_iE_i$, where $E_i$ is the exceptional divisor of $\psi_i$, $m_i=\intinf{d_i/2}$ with $d_i$ the multiplicity in $R_{i-1}$ of the blown-up point and $\intinf{d_i/2}$ denotes the integral part of $d_i/2$. One has the following relations (cf. \cite{barth} V.22):
\begin{equation}
K_{\widetilde{X}}^2=2K_Y^2+2K_Y.R+\frac{1}{2}R^2-2\sum_{i=0}^{t-1}(m_i-1)^2,
\label{cansq}
\end{equation}
and
\begin{equation}
\chi(\O_{\widetilde{X}})=2\chi(\O_Y)+\frac{1}{4}K_Y.R+\frac{1}{8}R^2-\frac{1}{2}\sum_{i=0}^{t-1}m_i(m_i-1).
\label{eulchar}
\end{equation}
Recall that the singularities of the branch locus $R$ are said to be negligible if $m_i=1$ (or, equivalently, $d_i=2,3$) for all $i=0,\dots,t-1$: in this case $X$ is the  canonical model of $\widetilde{X}$ (cf. \cite{barth} III.7 table 1) and $K_{\widetilde{X}}=(\pi_t\circ\psi_t\circ\dots\circ\psi_1)^*(K_Y+L)$ (ibidem Theorem III.7.2).  
Moreover, if $Y$ contains no rational curves, we have that $\widetilde{X}$ is minimal (in general it can have exceptional divisors even if $Y$ contains no rational curves, cf. \cite{barth} III.7 table 1)

\begin{remark}
\label{alb}
Suppose that we have a double cover $\pi\colon X\to Y$ with non-trivial smooth branch divisor $R$. 
Then, if $q(X)=q(Y)$, it follows that $\alb_{\pi}\colon \Alb(X)\to\Alb(Y)$ is an isomorphism. 
Indeed, because $q(X)=q(Y)$, the morphism $\alb_{\pi}$ is an isogeny and so is, by duality, $\pi^*\colon\Pic^0 Y\to\Pic^0X$. 
Suppose that there exists a non-trivial element $\eta\in \ker({\pi}^*)$. 
This in particular means that $\eta$ is a torsion element ($\ker({\pi}^*)$ is a finite group) and ${\pi}^*(\eta)=0$. If we consider the \'etale cover $Z\to{Y}$ given by $\eta$ and we complete the diagram as follows
\[
\begin{tikzcd}
\bigsqcup_{i=1}^{ord(\eta)}{X} \arrow[dr, phantom, "\square"] \arrow[r] \arrow[d]  & {X}\arrow[d, "{\pi}"]\\ 
Z \arrow[r] & {Y},
\end{tikzcd}
\]
we see that ${\pi}$ factors through $Z$, but this is impossible because it has degree two and has ramification. So $\alb_{\pi}$ is an isomorphism.

We would like to stress that if, in the same hypothesis, we suppose that $\pi$ has no ramification, then $\alb_{\pi}$ is an isogeny of order two. Indeed any \'etale cover of degree two is induced by a torsion element $L\in\Pic^0(Y)[2]$ for which $\pi^*L=\O_X$.
\end{remark}

\section{Examples}

In this section we give explicit examples of surfaces which satisfy equalities in Theorems \ref{mioa} and \ref{miob}, proving that all the inequalities are sharp. First we give an example of a surface satisfying  equality in Theorem \ref{mioa} for $q\geq 3$ (a characterization of the surfaces satisfying  equality for $q=2$ is done in \cite{barparsto}).

\begin{example}[double cover of a product elliptic surface]
\label{es1}
We consider an elliptic surface $Y_0=C\times E$ which is the product of an elliptic curve $E$ and a curve $C$ of genus $g>1$.  With an abuse of notation, we call $E$ the class of a fibre of $\pi_C$ in $\ns(Y_0)$ and $C$ the class of a fibre of $\pi_E$ in $\ns(Y_0)$. 

\[
\begin{tikzcd}
	 & [-2em] C_e\arrow[mapsto]{r}{} \arrow[phantom,"\rotatebox{270}{$\subseteq$}"]{d}{} & e \arrow[phantom,"\rotatebox{270}{$\in$}"]{d}{}\\ [-3ex]
	E_c \arrow[phantom,"\subseteq"]{r}{} \arrow[mapsto]{d}{} & C\times E\arrow{r}{\pi_E} \arrow{d}{\pi_C} & E\\
	c \arrow[phantom,"\in"]{r}{} & C & 
\end{tikzcd}
\]
We know that every divisor of even degree on a curve is two-divisible in the Picard group: by this, it follows that $R_0\sim_{hom}2C+2dE$ ($d> 0$) is two-divisible in $\Pic(Y_0)$, i.e. there exists a line bundle $L_0$ such that $R_0=2L_0$, moreover $R_0.E=2$. There certainly exist elements in this homological class that are reduced and have at most negligible singularities: for example it is enough to take different fibres $C_1, C_2$ and $E_1,\dots,  E_{2d}$, then $C_1+C_2+E_1+\dots+E_{2d}$ has only double points (Actually, if $d\gg 0$, a general element of the homological class $R_0$ is smooth by Bertini). It is immediate that $K_{Y_0}.R_0=4(q-2)$, where $q=q(Y_0)=g+1$.

Thus we obtain a double cover $\pi_0\colon X_0\to Y_0$ and after the canonical resolution (cf. section \ref{dblcov}) we get a smooth surface $X$ and the following diagram:
\[
\begin{tikzcd}
X\arrow{r}{\phi}\arrow{d}{\pi} & X_0\arrow{d}{\pi_0}\\
Y\arrow{r}{\psi} & Y_0.
\end{tikzcd}
\] 

We know that, if the singularities are at most negligible, $K_X=(\phi\circ\pi_0)^*(K_{Y_0}+L_0)\sim_{\textit{Num}}(\phi\circ\pi_0)^*(C+(2q-4+d)E)$. It is easy to see, by the Nakai-Moishezon criterion, that $C+(2q-4+d)E$ is always ample and, from this, it follows that $X$ is of general type. Furthermore $X$ is minimal, because $Y_0$ is, and its canonical model is $X_0$.

By Equations \ref{cansq} and \ref{eulchar} we get
\[K_X^2=2(K_{Y_0}+\frac{1}{2}R_0)^2=8(q-2)+4d\]
and
\[\chi(\O_X)=q-2+d.\]
Moreover, because $L_0$ is ample and $(\pi_0)_*\O_{X_0}=\O_{Y_0}\oplus L_0^{-1}$, we have that $q(X)=q$.

In particular 
\[K_X^2-4\chi(\O_X)=4(q-2).\]
Remark \ref{alb} ensures that the Albanese varieties coincide and that the Albanese morphism of $X$ is composed with an involution.

Notice also that 
\[K_X^2-\frac{9}{2}\chi(\O_X)=4(q-2)-\frac{1}{2}(q-2)-\frac{1}{2}d=\frac{1}{2}(7(q-2)-d),\]
which is smaller than zero if and only if $d>7(q-2)$.

Hence we have proved that these surfaces satisfy the conditions of Theorem \ref{mioa}: the next step is to prove that they are the only ones; this will be done in Section 5.
\end{example}

Now we give three examples of surfaces for which equality holds in Theorem \ref{miob}: in the first two cases we have $q(X)\geq 3$, while in the last example $q(X)=2$.

\begin{example}
\label{buel}
The easiest possible case is a simple modification of Example \ref{es1}. We take $Y_0$ as before: in this case we just need to take $R_0\sim_{hom}4C+2dE$ and everything is verified in a completely similar way (as before, we need $d> 7(q-2)$). In this case we have $q(X)=q, K_X^2=16(q-2)+8d$ and $\chi(\O_X)=2(q-2)+2d$. Hence $K_X^2-4\chi(\O_X)=8(q-2)$.
\label{es3}
\end{example}

\medskip

Before the next example, we recall some facts that will be useful. It is known that the Jacobian variety $\jac C'$ of a general curve $C'$ of genus $g'$ is simple (cf. \cite{lange} Theorem 17.5.1). It is also known that, given a general \'etale double cover $C\to C'$ its Prym variety $P(C,C')$ is simple (cf. \cite{biswas} or \cite{pirola} Proposition 3.4). Because $P(C,C')$ is complementary to $\jac C'$ inside $\jac C$ (cf. \cite{lange} Section 12.4), by Poincar\'e's reducibility Theorem (ibidem Theorem 5.3.5), $\jac C$ is isogenous to $\jac C'\times P(C,C')$. In particular there are no Abelian subvarieties of codimension $1$ of $\jac C$ if $g'>2$ (if $g'=2$, the dimension of $P(C,C')$ is $1$ , in particular $\jac C'$ is an Abelian subvariety of codimension $1$ of $\jac C$). So, for every elliptic curve $E$, the set $\Hom(C,E)$ contains only constant morphisms.

\begin{example}[Double cover of a non-trivial smooth elliptic surface]
\label{es4}
Here we present  an example of surface $X$ of general type satisfying equality in Theorem \ref{miob}, whose canonical model is a ramified double cover of an elliptic surface which is not a product. We start with $C'$, $C$ and $E$ as above.

Let $G$ be a subgroup of order $2$ of $E$ acting freely on $C$ such that the quotient $C/G$ is $C'$: this action clearly extends diagonally to the product giving a finite morphism of degree two $f\colon C\times E\to Y_0:=(C\times E)/G$. Proposition \ref{Teo}, together with the non-existence of surjective morphisms from $C$ to $E$, show that there is no line bundle $L$ fixed by $G$ for which $L.E=1$. By Lemma \ref{lem2} this is enough to prove that $Y_0$ is not a product. We denote by $\pi_{C/G}$ and $\pi_{E/G}$ the two morphisms from $Y_0$ to $C/G$ and $E/G$ respectively, whose generic fibres are $E$ and $C$ respectively. We have the following commutative diagram:
\[
\begin{tikzcd}
E\arrow{r}{f_E} & E/G\\
C\times E\arrow[swap]{u}{\pi_E}\arrow{d}{\pi_C} \arrow{r}{f} & Y_0\arrow[swap]{u}{\pi_{E/G}}\arrow{d}{\pi_{C/G}}\\
C\arrow{r}{f_C} & C/G.
\end{tikzcd}
\]

Recall that, in our case, a line bundle on $C\times E$ descends to a line bundle on $Y_0$ if and only if its class in the Picard group is fixed by $G$  (cf. \cite{drezet} Theorem 2.3). Indeed, when the group $G$ is cyclic, it is always possible to give to a line bundle $L$ fixed by the action of $G$, the structure of a $G$-bundle.

Let $L_C$ and $R_C$ be two line bundles on $C/G$ of degree respectively $d$ and $2d$  such that $2L_C=R_C$. Similarly, let $L_E$ and $R_E$ be two line bundles on $E/G$ of degree respectively $1$ and $2$ such that $2L_E=R_E$. If we take an element $R\in |\pi_{C/G}^*R_C+\pi_{E/G}^*R_E|$ with at most negligible singularities (as in Example \ref{es1}, we can even assume that $R$ is smooth by Bertini if $d\gg 0$) and we denote by $L=L_C\boxtimes L_E$, we have $2L=\O_{Y_0}(R)$; hence we get a double cover $\pi_0\colon X_0\to Y_0$, and after the canonical resolution (cf. section \ref{dblcov}) we get a smooth surface $X$ and the following diagram:
\[
\begin{tikzcd}
X\arrow{r}{\phi}\arrow{d}{\pi} & X_0\arrow{d}{\pi_0}\\
Y\arrow{r}{\psi} & Y_0,
\end{tikzcd}
\]  
where $X$ is smooth and, because $L$ is ample, $q:=q(X)=q(Y_0)$. By Equations \ref{cansq} and \ref{eulchar}
\[K_X^2=2(K_{Y_0}+\frac{1}{2}R)^2=16(q-2)+8d\]
 and 
\[\chi(\O_X)=\frac{1}{4}(K_{Y_0}+\frac{1}{2}R).R=2(q-2)+2d.\]
In particular we have 
\[K^2_X-4\chi(\O_X)=8(q-2).\]
It is obvious (cf. Remark \ref{alb}) that the Albanese morphism of $X$ factors through $Y$ and, as in the previous examples, we require $d>7(q-2)$ in order to have that $K_X^2<\frac{9}{2}\chi(\O_X)$: this concludes our example.  Notice that the condition $g(C')> 2$ implies that $q(X)>3$: we do not know if there exists an example of $X$ for which the equality holds where the quotient by the involution is not a product when $q=3$.
Indeed if $g'=2$ (equivalently, $q=3$) we have that $\Hom_{c-0}(C,E)$ is not trivial.
\end{example}

\begin{example}[Double cover of an Abelian variety ramified over a divisor with a quadruple point]
This is an example of surface of general type with maximal Albanese dimension whose Albanese morphism is composed with an involution with $q(X)=2$ satisfying equality in Theorem \ref{miob}, i.e.
\[K^2_X=4\chi(\O_X)+2.\]

Let $A$ be an Abelian surface and let $H$ be a very ample divisor. Take general elements $H_1, H_2, H_3, H_4, H_5, H_6, H_7, H_8$  inside the linear system  $|H|$ such that they are smooth, they all pass through a point $p$ and $H_i$ intersects $H_j$ for $i\neq j$ transversely at every intersection point. 
Denote by $H_{i,j}=(H_i\cap H_j)\setminus \{p\}$ for $1\leq i<j \leq 8$: we also require that $H_{i,j}\cap H_{i',j'}=\emptyset$ for every $(i,j)\neq(i',j')$. 
Let $D_1=H_1+H_2+H_3+H_4$ and $D_2=H_5+H_6+H_7+H_8$ and consider the pencil $P=\lambda D_1+\mu D_2$: the base locus of $P$ is $\{p\}\sqcup \bigsqcup_{1\leq i,j\leq 4} H_{i,j+4}$. By a Bertini-type argument, the generic element $R\in P$ is smooth away from $p$ and has a quadruple ordinary point at $p$. 

It is obvious that $R$ is two-divisible, i.e. there exists $L\in\Pic(A)$ with $2L=\O_A(R)$. Consider the double cover $\pi\colon X\to A$ branched over $R$, and take the canonical resolution $\widetilde{\pi}\colon\widetilde{X}\to\widetilde{A}$ (cf. section \ref{dblcov}). Because $R$ has a single quadruple ordinary point, $\widetilde{X}$ is the minimal smooth model of $X$ (cf. \cite{barth} III.7). Denote by $E$ the exceptional divisor of $\psi\colon\widetilde{A}\to A$: we have that $\widetilde{L}^2=(\psi^*L-2E)^2=L^2-4>0$  if we assume $L$ to be sufficiently ample.
Moreover, up to take a multiple of $H$, we may suppose that the Seshadri constant of $L$ is sufficiently big such that $\widetilde{L}$ intersects positively every curve on $\widetilde{A}$ (cf. \cite{lazarsfeldi} Definition 5.1.1 and Example 5.1.4). 
In particular $\widetilde{L}$ is very ample by the Nakai-Moishezon criterion, from which follows $q(\widetilde{X})=q(A)=2$ and $X$ is of general type. By Equations \ref{cansq} and \ref{eulchar}, we have that
\[K_{\widetilde{X}}^2= \frac{1}{2}R^2-2\]
and
\[\chi(\O_{\widetilde{X}})=\frac{1}{8}R^2-1.\]
In particular we have
\[K_{\widetilde{X}}^2-4\chi(\O_{\widetilde{X}})=2\]
and clearly (cf. Remark \ref{alb}) the Albanese morphism of $X$ is the natural morphism to $A$, moreover if one takes the ample divisor $H$ such that $H^2>2$, one easily derives $K_X^2<\frac{9}{2}\chi(\O_X)$.

\end{example}

\section{Severi Type inequalities}
In this section we briefly recall the main ideas of Lu and Zuo in their paper \cite{lu} that will be used in our proofs.  As stated there (Theorem 3.1) the condition
\[K_X^2<\frac{9}{2}\chi(\O_X)\] 
is necessary to prove that there exists an involution $i\colon X\to X$ with respect to which the Albanese morphism $\alb_X$ is stable (or, equivalently, $\alb_X$ is composed with $i$), i.e.
\[
\begin{tikzcd}
X \arrow{rr}{i}\arrow[swap]{dr}{\alb_X}& & X\arrow{dl}{\alb_X} \\
 & \Alb(X). &
\end{tikzcd}
\]

\begin{remark}
\label{rem_axb}
Notice that the condition "$\alb_X$ is composed with an involution" is necessary. Otherwise it is easy to construct a counter example. Take a product of curves $A$ and $B$ with $g(A)=2$ and $g(B)\geq 2$. Then the surface $X=A\times B$, which is of general type, gives the desired counterexample. Indeed, the Albanese morphism is clearly injective and, by K\"unneth formula and the formula of the canonical bundle of a product, we have that 
\[K_X^2=8(g(B)-1) \quad \chi(\O_X)=g(B)-1.\]
Hence 
\[K_X^2-4\chi(\O_X)=4(g(B)-1)=4(q(X)-3)<4(q(X)-2).\]

Notice also that, at least for Theorem \ref{miob}, it is also necessary the condition $K_X^2<\frac{9}{2}\chi(\O_X)$ even if we are assuming that the Albanese morphism is composed with an involution.
Let $Y=B\times B$, where $B$ is as above and denote by $B_i$ a fibre of the i-th projection $\pi_i\colon Y\to B$.
The invariants of $Y$ are:
\begin{itemize}
	\item $q(Y)=4$;
	\item $p_g(Y)=4$,
	\item $\chi(\O_Y)=1$;
	\item $K_Y\sim_{num}2B_1+2B_2$;
	\item $K_Y^2=8$.
\end{itemize}
Let $L=B_1+B_2$ and take $R$ a general element of $|2L|$ (in particular $R\sim_{num} K_Y$) which, by Bertini, may be assumed to have at most negligible singularities.
Then the desingularization $X$ of the double cover defined by $L^2=\O_Y(R)$ satisfies (cf. Remark \ref{alb} and Equations \ref{cansq} and \ref{eulchar})
\begin{itemize}
	\item $q(X)=q(Y)$ and hence they have the same Albanese variety;
	\item $\chi(\O_X)=5$;
	\item $K_X^2=36$,
\end{itemize}
from which we obtain
\[K_X^2-4\chi(\O_X)=36-20=16=8(q-2)\geq \frac{5}{2}=\frac{1}{2}\chi(\O_X).\]
\end{remark}

\medskip

The quotient surface $Y=X/i$ can be  singular, but its singular points are not so bad: they are $A_1$ singularities and they are in one-to-one correspondence with the isolated fixed points of $i$. Let $Y'$ be the resolution obtained by blowing up the singularities and let $X'$ be the blow-up of $X$ over the isolated fixed points of $i$. Denote by $Y_0$ the minimal model of $Y'$ and by $X_0$ the middle term of the Stein factorization of the morphism from $X'$ to $Y_0$. What we get is the following commutative diagram.
\[
\begin{tikzcd}
X \arrow{d}{\pi} & X' \arrow{d}{\pi'}\arrow{l}[swap]{f_X}\arrow{r}{g_X} & X_0 \arrow{d}{\pi_0}\\
Y=X/i 					 & Y' \arrow{r}{g_Y} \arrow{l}[swap]{f_Y}								& Y_0.
\end{tikzcd}
\]
 We know that the double covers $\pi'$ and $\pi_0$ are  given by equations $2L'=\O_{Y'}(R')$ and $2L_0=\O_{Y_0}(R_0)$ respectively where $R'$ and $R_0$ are the branch divisors. Notice that $R_0$ has to be reduced (because $X_0$ is normal), while $R'$ has to be smooth (because $X'$ is smooth). It follows directly from the universal property of the Albanese morphism and the fact that $\alb_X$ factors through $\pi$ that $Y_0$ is a surface of maximal Albanese dimension with $q(Y_0)=q$.

By the classification of minimal surfaces, we know that $Y_0$ has non-negative Kodaira dimension and maximal Albanese dimension and in particular we have the following possibilities :
\begin{itemize}
	\item if $k(Y_0)=0$, then $Y_0$ is an Abelian surface and $q=2$;
	\item if $k(Y_0)=1$, then $Y_0$ is an isotrivial smooth elliptic surface over a curve $C$ with genus $g(C)=q-1$ and $q\geq 3$;
	\item if $k(Y_0)=2$, then $Y_0$ is a minimal surface of general  type of maximal Albanese dimension with $q\geq 2$.
\end{itemize}

First, we restrict to the case $k(Y_0)<2$. The surface $X_0$ may not be smooth, so we perform the canonical resolution (cf. section \ref{dblcov}). We get the following diagram
\[
\begin{tikzcd}
X \arrow{d}{\pi} & X_t \arrow{d}{\pi_t}\arrow{l}[swap]{\phi}\arrow{r}{\phi_0} & X_0 \arrow{d}{\pi_0}\\
Y=X/i & Y_t \arrow{r}{\psi_0} \arrow{l}[swap]{\psi} & Y_0.
\end{tikzcd}
\]
 We notice that $X$ is nothing but the minimal model of $X_t$: thus, there exists an integer $n$ such that $\phi$ is the composition of $n$ blow-ups. In particular $K_X^2=K_{X_t}^2+n$ and $\chi(\O_X)=\chi(\O_{X_t})$.

If $Y_0$ is an elliptic surface over a curve $C$ with $g(C)=q-1$, then, denoting by $F$ a general elliptic fibre of the fibration, $F.R_0=2F.L_0\geq 2$. Indeed if $F.R_0=0$, then we would have that $X$ has an elliptic fibration, which is not the case because $X$ is of general type. Recall that the numerical class of the canonical bundle of such an elliptic surface is (see \cite{barth} V.12.3)
\[K_{Y_0}\sim_{\textit{Num}} (2g(C)-2)F+\sum_{j=1}^m(n_j-1)F_j=2(q-2)F+\sum_{j=1}^m(n_j-1)F_j,\]
where $n_jF_j$ are the the multiple fibres with $F_j$ reduced.
When $Y_0$ is Abelian, we know that the canonical bundle is trivial.

Summarizing we get (thanks to Equations \ref{cansq} and \ref{eulchar})
\begin{equation}
\label{eqell1}
\begin{split}
K_X^2-4\chi(\O_X)=K_{X_t}^2&-4\chi(\O_{X_t})+n=\\
=2(K_{Y_0}^2-4\chi(\O_{Y_0}))+K_{Y_0}&.R_0+2\sum_{i=0}^{t-1}(m_i-1)+n=\\
=2(q-2)F.R_0+\sum_{j=1}^m(n_j-1&)F_j.R_0+2\sum_{i=0}^{t-1}(m_i-1)+n\geq\\
\geq 4(q-2)+\sum_{j=1}^m(n_j-1)F_j.R_0&+2\sum_{i=0}^{t-1}(m_i-1)+n\geq 4(q-2).
\end{split}
\end{equation}

So equality holds if and only if $n=0$ (i.e. $X=X_t$), $m_i=1$ for all $i$, $n_j=1$ for all $j$ (this is required only when $Y_0$ is elliptic) and $K_{Y_0}.R_0=4(q-2)$. The last condition is trivially true in the case $Y_0$ is an Abelian surface, while in the case of an elliptic surface it tells us that there are no multiple fibres  and from this it follows that, after a suitable base change, $Y_0$ is a product of an elliptic curve with a curve of higher genus (cf. \cite{serrano}). The condition $m_i=1$ implies that the singularities of the branch divisor $R_0$ are at most negligible. Hence the inverse image of  an exceptional curve is a union of $-2$-curves (cf. \cite{barth} III.7 Table 1). This, together with the fact that $Y_0$ has no rational curves, implies that $X_0$ is the canonical model of $X$.

\begin{remark}
\label{uguaglianza}
We stress here what are the necessary numerical conditions on $Y_0$ (in the case it is an elliptic surface) in order to satisfy the equality of Theorem \ref{mioa}. Looking at Equation \ref{eqell1} it is immediate that this happens if and only if 
\begin{itemize}
	\item $F.R_0=2$;
	\item $n_j=1\quad \forall\ j$;
	\item $m_i=1\quad \forall\ i$;
	\item $n=0$.
\end{itemize}
The same conditions have to be verified in the case $Y_0$ is Abelian without the condition on the multiple fibres.
\end{remark}

Consider the case when $Y_0$ is of general type. By Theorem 1.3 of \cite{lu}, there are two possibilities. First we assume that $K_{Y_0}^2\geq 4\chi(\O_{Y_0})+4(q-2)$. As before we obtain
\begin{equation}
\label{eqgen1}
\begin{split}
K_X^2-4\chi(\O_X)&\geq 2(K_{Y_0}^2-4\chi(\O_{Y_0}))+K_{Y_0}.R_0+2\sum_{i=0}^{t-1}(m_i-1)\geq\\
&\geq 8(q-2)\geq 4(q-2).
\end{split}
\end{equation}
Equality would be possible if $q=2$, but it is shown that this is not the case (cf. \cite{lu} proof of Theorem 1.3 or \cite{barparsto} Theorem 1.1).

The other possible case is when $K_{Y_0}^2\geq \frac{9}{2}\chi(\O_{Y_0})$. In this case we have that 
\begin{equation}
\label{eqgen2}
K_X^2-4\chi(\O_X)>64 \max\{q-3,1\}
\end{equation}
(\cite{lu} Lemma 4.4), i.e. we have a much stronger inequality (it is in this step that the condition $K_X^2<\frac{9}{2}\chi(\O_X)$ is really needed).

\section{Proof of Theorem \ref{mioa}}

In this section we are going to prove Theorem \ref{mioa}. This will be done in two steps: first, we show that all the possible examples of a $2$-divisible divisor $R$ in an elliptic surface $Y=C\times E$ which intersects the elliptic fibre twice are linearly equivalent to those  in Example  \ref{es1}. The main tools of this first part are the See-saw Principle (Theorem \ref{saw}) and the explicit  formula for the Picard group of $C\times E$ (Proposition \ref{Teo}). Then we will see that the equalities in Theorems \ref{mioa} and \ref{miob} are stable under \'etale base change coming from the base of the elliptic fibration which, thanks to Lemma \ref{lem2}, will imply that $Y$ has to be a product.

Let, as usual, $C$ be a curve of genus $g(C)>1$ and $E$ be an elliptic curve. We have the following Lemmas.

\begin{lemma}
Let $Y=C\times E$, then a double cover of $Y$ branched over a divisor $R$ linearly equivalent to
\[\Gamma_f+\Gamma_{f+e}+\sum_{i=1}^{2d}E_i,\]
where $f\in\Hom(C,E)$, is isomorphic to a double cover of $C\times E$ branched over a divisor $R'$ linearly equivalent to
\[C_1+C_2+\sum_{i=1}^{2d}E_i.\]
\label{es2}
\end{lemma}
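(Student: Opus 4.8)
The key observation is that a double cover $\pi\colon X_0\to Y$ branched over a reduced divisor $R$ depends, up to isomorphism over $Y$, only on the pair $(\mathcal{O}_Y(R), L)$ where $L$ is the chosen square root, i.e. only on $R$ up to linear equivalence (together with the choice of $L$, but since $\Pic(C\times E)$ has no $2$-torsion issues that interfere here, the linear class of $R$ is what matters). So the statement will follow if I can show
\[
\Gamma_f+\Gamma_{f+e}+\sum_{i=1}^{2d}E_i \;\sim_{lin}\; C_1+C_2+\sum_{i=1}^{2d}E_i
\]
as divisors on $C\times E$, possibly after replacing the $E_i$ by other fibres of $\pi_C$ (which does not change the linear class, since any two fibres of $\pi_C$ are linearly equivalent). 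Equivalently, I must show $\Gamma_f+\Gamma_{f+e}\sim_{lin}C_1+C_2+(\text{a combination of fibres }E_c)$, and then absorb the extra $E_c$'s into the $\sum E_i$ part. In the additive language of Proposition~\ref{Teo}, this is the assertion that $\beta(\Gamma_f+\Gamma_{f+e})=0$ in $\Hom_{c_0}(C,E)$, so that $\Gamma_f+\Gamma_{f+e}$ lies in the image of $\alpha$, i.e. is linearly equivalent to a divisor of the form $\pi_E^*(L_E)+\pi_C^*(L_C)$ with $L_E$ of degree $2$ and $L_C$ of degree $2d_0$ for the appropriate $d_0$; the first summand is then $\sim_{lin}C_1+C_2$ and the second is a sum of fibres $E_c$.

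The computation of $\beta$ is immediate from Equation~\eqref{inve}: $\beta(\Gamma_f)(c)=f(c)$ and $\beta(\Gamma_{f+e})(c)=f(c)$, wait — that gives $\beta(\Gamma_f+\Gamma_{f+e})(c)=2f(c)$, not $0$. So instead the right statement is that $\Gamma_f+\Gamma_{f+e}$ differs from $\Gamma_{2f}$-type data by fibres; more precisely I should compute $\beta$ of the target. We have $\beta(C_1+C_2+\sum E_i)=0$ since $C_i=C_{e_i}$ and $E_i=E_{c_i}$ are pullbacks of divisors from the two factors, hence in $\im\alpha$. Meanwhile $\beta(\Gamma_f+\Gamma_{f+e})=2f$. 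These agree only when $f$ has order dividing $2$ — but in the setting of Theorem~\ref{mioa} we will eventually show $f$ is constant, whereas here the Lemma is stated for arbitrary $f$. The resolution is that I have the wrong normalization: the correct claim, and the one I will prove, is that $2L=\mathcal{O}_Y(R)$ forces a compatibility. Let me restate the plan cleanly. Since $R=\Gamma_f+\Gamma_{f+e}+\sum_{i=1}^{2d}E_i$ is the branch divisor of a double cover, there is $L$ with $2L\sim_{lin}R$; applying $\beta$ gives $2\beta(L)=\beta(R)=2f$, so $\beta(L)-f$ is $2$-torsion in $\Hom_{c_0}(C,E)$, hence — since $\Hom_{c_0}(C,E)\cong\Hom(\jac C,\jac E)$ is torsion-free — we get $\beta(L)=f$. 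Now consider $L'=L-s(f)$ where $s$ is the section in Proposition~\ref{Teo}; then $\beta(L')=0$, so $L'\in\im\alpha$, say $L'\sim_{lin}\pi_E^*L_E+\pi_C^*L_C$. Set $R'=2L'$: this is linearly equivalent to $2\pi_E^*L_E+2\pi_C^*L_C$, which (choosing reduced representatives) is $C_1+C_2+\sum_{i=1}^{2d'}E_i$ for the appropriate $d'$; a degree count using $R.E_c=R'.E_c=2$ and $\deg$ on the $C$-factor pins down $d'=d$ after accounting for the contribution of $s(f)$, which is supported on $\Gamma_f$ plus fibres $E_c$ and has the same intersection number with $E_c$. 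Since $L$ and $L'$ differ by $s(f)$, and $2\,s(f)=2\Gamma_f - 2C_0 - 2\sum a_c E_c$, the covers defined by $(L,R)$ and $(L',R')$ are isomorphic over $Y$ precisely because $L-L'$ is the class whose square is $R-R'$; concretely $\mathcal{O}_Y(R)\otimes(L-L')^{-2}=\mathcal{O}_Y(R')$ and the identification of the two $\mathcal{O}_Y\oplus L^{-1}$-algebra structures is the standard one.

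\medskip

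The main obstacle I expect is bookkeeping rather than conceptual: keeping track of which divisors are genuinely linearly equivalent versus merely homologically equivalent (the See-saw Principle, Theorem~\ref{saw}, will be the tool that upgrades homological to linear equivalence on the $E$-fibres, exactly as it is used in the proof of Proposition~\ref{Teo}), and making sure the count of fibres $E_i$ on both sides is literally $2d$ and not $2d$ shifted by the multiplicity data $a_c$ coming from $s(f)$. A secondary point to be careful about is that "isomorphic double cover" should be interpreted as isomorphic over $Y$ (or at least as abstract surfaces with compatible maps to $Y$), and that replacing $R$ by a linearly equivalent $R'$ requires choosing the square root $L'$ compatibly — the torsion-freeness of $\Hom(\jac C,\jac E)$ is what guarantees there is no ambiguity, and this is the one place where the specific structure of $\Pic(C\times E)$ (as opposed to an arbitrary surface) is essential.
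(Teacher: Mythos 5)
Your argument has a fatal gap at both ends. The opening ``key observation'' --- that a double cover branched over a reduced divisor $R$ depends, up to isomorphism over $Y$, only on the linear equivalence class of $R$ together with the square root $L$ --- is false: the cover is $\mathrm{Spec}(\O_Y\oplus L^{-1})$ with algebra structure given by the \emph{actual section} of $L^{2}$ cutting out $R$, so different members of the same linear system give genuinely different, in general non-isomorphic, covers (double covers of $\P^1$ branched over different four-point divisors already have different $j$-invariants). Moreover, your own computation $\beta(\Gamma_f+\Gamma_{f+e})=2f\neq 0=\beta\bigl(C_1+C_2+\sum E_i\bigr)$ shows that for nonconstant $f$ the divisor $R$ is \emph{not} linearly equivalent to any divisor of the target form (as $\Hom_{c_0}(C,E)$ is torsion-free), so the strategy of exhibiting a linear equivalence cannot succeed. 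The attempted repair --- replacing $(L,R)$ by $(L'=L-s(f),\ R'=2L')$ and asserting that ``the identification of the two algebra structures is the standard one'' --- is precisely where the proof breaks: no such identification exists. An isomorphism of the two covers over $Y$ would conjugate the deck involutions into each other, hence respect the eigenspace decompositions and force $L\cong L'$ and $R=R'$ as divisors, and you have changed both.

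The missing idea is that the isomorphism claimed in the Lemma is not over $Y$ via the identity but covers a nontrivial automorphism of $C\times E$. The paper's proof takes $\alpha_f\in\aut(C\times E)$, $(c,e)\mapsto(c,e-f(c))$; since $\Gamma_{f+e'}=\pi_f^{-1}(e')$ for $\pi_f(c,e)=e-f(c)$, this automorphism sends each translated graph $\Gamma_{f+e'}$ to a fibre $C_{e'}$ of the second projection and fixes every fibre $E_c$ of $\pi_C$. Hence $(\alpha_f)_*R\sim_{lin}C_1+C_2+\sum_{i=1}^{2d}E_i$, and the double cover branched over $R$ with square root $L$ is isomorphic, via the map induced by $\alpha_f$, to the double cover branched over $(\alpha_f)_*R$ with square root $(\alpha_f)_*L$. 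In effect, the class $s(f)$ you subtract in $\Pic(C\times E)$ must instead be realized geometrically as the automorphism $\alpha_f$ moving the branch divisor; used only at the level of divisor classes it does not act on the cover at all.
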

\begin{proof}
In order to prove this, it is enough to see that there exists an automorphism of $C\times E$ which sends $R$ to $R'$. We will prove even more: actually the elliptic fibres of $\pi_C$ will be fixed by this automorphism. Indeed let $\pi_f\colon C\times E\to E$ be the morphism given by $(c,e)\mapsto e-f(c)$, we notice that $\pi_f^{-1}(e)=\Gamma_{f+e}$. In particular all the graphs $\Gamma_{f+e}$ are equivalent in $\ns(C\times E)$. Notice that if we consider the automorphism $\alpha_f$ of $C\times E$ defined by $(c,e)\mapsto (c,e-f(c))$, this clearly fixes the fibres of $\pi_C$ with respect to which $\pi_f$ is the second projection, i.e. we have the following commutative diagram

\[
\begin{tikzcd}
 & E \\
C\times E \arrow{d}{\pi_C}\arrow[ur, "\pi_f" ] \arrow{r}{\alpha_f}  & C\times E\arrow{d}{\pi_C}\arrow[swap]{u}{\pi_E}\\
C\arrow{r}{Id} &  C:
\end{tikzcd}
\]
this concludes the proof. 
\end{proof}

\begin{remark}
Lemma \ref{es2} says that the condition on the branch divisor $R$ in Theorem \ref{mioa} can be given in a seemingly weaker way i.e. we can replace the fibres $C_1$ and $C_2$ by the translated graphs $\Gamma_f$ and $\Gamma_{f+e}$.
\end{remark}

\begin{lemma}
\label{lem1}
Let $Y=C\times E$, $R$ be an effective reduced divisor such that $R.E=2$ and there exists a line bundle $L$ satisfying $2L=R$. Then 
\[R\sim_{lin}\Gamma_{g+e_1}+\Gamma_{g+e_2}+\sum_{i}E_i,\]
where $g\in \Hom_{c_0}(C,E)$ and $e_i$ are elements in $E$.
\end{lemma}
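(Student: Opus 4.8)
The plan is to first cancel the two graphs appearing in the target divisor by matching $\beta$-classes, then to pin down the remaining line bundle by the See-saw Principle, and finally to produce the vertical part by a coordinate change of the type $\alpha_f$ used above.

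\emph{Step 1 (isolating a vertical correction).} I would set $g:=\beta(L)\in\Hom_{c_0}(C,E)$. Since $\O_Y(R)=L^{\otimes2}$ and $\beta$ is a homomorphism of groups (Proposition \ref{Teo}), $\beta(\O_Y(R))=2g$, whereas $\beta(\O_Y(\Gamma_{g+e}))=g$ for every $e\in E$ by Equation \ref{inve}. I would then choose $e_1,e_2\in E$ with $i_{c_0}^{*}\O_Y(R)\cong\O_E([e_1]+[e_2])$, which is possible because $i_{c_0}^{*}\O_Y(R)$ is an effective line bundle of degree $R.E=2$ on $E$ (if $E_{c_0}$ is a component of $R$, write $R=E_{c_0}+R'$ and use $i_{c_0}^{*}\O_Y(E_{c_0})\cong\O_E$ together with $R'.E_{c_0}=2$). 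Put $N:=\O_Y(R-\Gamma_{g+e_1}-\Gamma_{g+e_2})$. Using $i_c^{*}(D)-i_{c_0}^{*}(D)=\beta(D)(c)$ and the definition of the group law on $\Hom_{c_0}(C,E)$, a short computation shows that for every $c\in C$ both $i_c^{*}\O_Y(R)$ and $i_c^{*}\O_Y(\Gamma_{g+e_1}+\Gamma_{g+e_2})$ equal $\O_E([e_1]+[e_2])$ twisted by the degree-$0$ bundle attached to $2g(c)\in E$; hence $i_c^{*}N$ is trivial for all $c$. By the See-saw Principle (Theorem \ref{saw}) there is $B\in\Pic(C)$ with $N\cong\pi_C^{*}\O_C(B)$, that is,
\[
R\sim_{lin}\Gamma_{g+e_1}+\Gamma_{g+e_2}+\pi_C^{*}B .
\]

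\emph{Step 2 ($B$ is effective).} I would apply the automorphism $\alpha_g\colon(c,e)\mapsto(c,e-g(c))$ of $C\times E$, exactly as in the proof of Lemma \ref{es2}: it commutes with $\pi_C$, carries each $\Gamma_{g+e_j}$ onto the $\pi_E$-fibre $C_{e_j}$, and fixes $\pi_C^{*}B$. Therefore $(\alpha_g)_{*}R$ is an effective divisor with $\O_Y((\alpha_g)_{*}R)\cong\pi_E^{*}\O_E([e_1]+[e_2])\otimes\pi_C^{*}\O_C(B)$, and the Künneth formula gives
\[
0<h^{0}\bigl(Y,\O_Y((\alpha_g)_{*}R)\bigr)=h^{0}\bigl(E,\O_E([e_1]+[e_2])\bigr)\cdot h^{0}\bigl(C,\O_C(B)\bigr)=2\,h^{0}(C,\O_C(B)).
\]
Hence $\O_C(B)$ has a nonzero section, so $B\sim_{lin}\sum_i[c_i]$ for suitable points $c_i\in C$ and $\pi_C^{*}B\sim_{lin}\sum_iE_{c_i}$; combined with Step 1 this yields $R\sim_{lin}\Gamma_{g+e_1}+\Gamma_{g+e_2}+\sum_iE_{c_i}$, which is the desired form.

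The only real input is the $2$-divisibility of $R$, which is used exactly once to obtain $\beta(\O_Y(R))=2g$ and thereby make the two graphs cancel; everything else is formal manipulation with the split exact sequence of Proposition \ref{Teo}, the See-saw Principle, and Künneth — and in fact only the effectiveness of $R$ is used, not its reducedness. I do not expect a genuine obstacle; the single spot asking for mild care is the description of $i_{c_0}^{*}\O_Y(R)$ when $E_{c_0}$ happens to be a component of $R$, handled as indicated.
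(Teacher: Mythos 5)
Your proof is correct and follows essentially the same route as the paper's: take $g=\beta(L)$ so that $\beta(R)=2g$, subtract the two translated graphs, apply the See-saw Principle to get the vertical part $\pi_C^*B$, and use $\alpha_g$ plus the K\"unneth formula to see that $B$ is effective. The only (welcome) difference is that you explicitly handle the degenerate case where $E_{c_0}$ is a component of $R$, which the paper glosses over by simply taking $e_1,e_2$ to be the points of $R\cap E_{c_0}$.
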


\begin{proof}
Let $R$ be as in the hypothesis and let $e_1, e_2\in E$ be the two points such that $(c_0,e_1), (c_0,e_2)\in R\cap E_{c_0}$ (it could be that $e_1=e_2$). Suppose that $\beta(R)=f$ (cf. Proposition \ref{Teo}): because $R=2L$ and the map $\beta$ is a group morphism, it follows that there exists an element $g\in \Hom_{c_0}(C,E)$ such that $2g=f$. 

Consider now the divisor 
\[D=R-\Gamma_{g+e_1}-\Gamma_{g+e_2}.\]
It is then obvious that $\beta(D)=0$ and, moreover, $D$ restricted to each fibre $E_c$ is trivial. Indeed we have that
\[0=\beta(D)(c)=i_c^*(D)-i_{c_0}^*(D)=i^*_c(D).\]
Using the See-saw Principle (cf. Theorem \ref{saw}) on $D$ we see that
\[D\sim_{lin}\pi_C^*(B)\]
i.e. 
\[R\sim_{lin}\Gamma_{g+e_1}+\Gamma_{g+e_2}+\pi^*_C(B).\]

It is possible to  show that $B$ has positive degree. Indeed, applying the isomorphism $\alpha_g$ (cf. Lemma \ref{es2}), we may assume that 
\[R\sim_{lin}\pi_E^*(A)+\pi_C^*(B)\]
where $A$ is a degree two divisor of $E$. Then, by K\"unneth formula, we have
\[h^0(R)=h^0(A)h^0(B)\]
which is positive if and only if the degree of $B$ is positive. Summing up, we have 
\[R\sim_{lin}\Gamma_{g+e_1}+\Gamma_{g+e_2}+\sum_iE_i.\qedhere\]
\end{proof}

We are now ready to prove a proposition that tells us that equalities of the type
\begin{equation}
\alpha K_X^2+\beta \chi(\O_X)+\gamma (q-2)=0
\label{eq:a}
\end{equation}
behave well with respect to \'etale covers coming from the base of the elliptic fibration.

Let $Y$ be an elliptic surface of maximal Albanese dimension over a curve $C$ of genus $g(C)=q(Y)-1$ and let $X$ be the minimal smooth model of the double cover given by the equation $2L=\O_Y(R)$ (where $R$ is supposed to be ample and with at most negligible singularities) and denote by $\pi\colon X\to Y$ the induced morphism.

\begin{lemma}
\label{etale}
 In the above settings, $q(X)-2$ is multiplicative with respect to \'etale covers coming from  $C$. This means that if we consider an \'etale cover  $\gamma \colon \widetilde{C}\to C$ of degree $d$ and we take the base change
\[
\begin{tikzcd}
\widetilde{X}\arrow[r,"\beta"]\arrow[d,"\widetilde{\pi}"] & X\arrow[d,"\pi"]\\
\widetilde{Y}\arrow[r,"\delta"]\arrow[d,"\widetilde{\alpha}"] & Y\arrow[d,"\alpha"]\\
\widetilde{C}\arrow[r,"\gamma"] & C,
\end{tikzcd}
\]
 then 
\[q(\widetilde{X})-2=d(q(X)-2).\]
\end{lemma}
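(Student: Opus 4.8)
The statement asks for the multiplicativity of $q(\widetilde X)-2$ under an étale base change of degree $d$ from the base curve $C$ of the elliptic fibration. The natural strategy is to compute $q(\widetilde X)$ directly from the double cover structure and keep careful track of how irregularity behaves under the two operations involved: the étale cover $\delta\colon\widetilde Y\to Y$ of the elliptic surface, and the double cover $\widetilde\pi\colon\widetilde X\to\widetilde Y$.

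\begin{proof}
By the very construction of the fibre product, $\widetilde Y=\widetilde{C}\times_C Y$ is an elliptic surface over $\widetilde C$ and $\widetilde\pi\colon\widetilde X\to \widetilde Y$ is the double cover determined by $\delta^*L$ and $\delta^*R$, where the branch divisor $\delta^*R$ still has at worst negligible singularities (étale pullback does not change local analytic types); hence $\widetilde X$ (or rather its minimal smooth model, which we keep calling $\widetilde X$) is again a surface as in the hypotheses. Because $\delta$ is étale of degree $d$, the elliptic surface $\widetilde Y$ has Euler characteristic and relevant invariants multiplied by $d$; in particular, since $Y$ has maximal Albanese dimension over $C$ with $g(C)=q(Y)-1$, standard facts on smooth isotrivial elliptic surfaces give $q(\widetilde Y)-1=g(\widetilde C)=d(g(C)-1)+1=d(q(Y)-2)+1$, i.e. $q(\widetilde Y)-2=d(q(Y)-2)$. (Equivalently, $q(Y)=g(C)+1$ is read off from $\alpha_*\O_Y=\O_C$ together with $R^1\alpha_*\O_Y=0$, which is preserved by the étale base change.)

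It then suffices to show that the double cover $\pi\colon X\to Y$ (and likewise $\widetilde\pi$) satisfies $q(X)=q(Y)$; granting this, $q(\widetilde X)-2=q(\widetilde Y)-2=d(q(Y)-2)=d(q(X)-2)$, which is the claim. The equality $q(X)=q(Y)$ follows from $\pi_*\O_X=\O_Y\oplus L^{-1}$ (Section~\ref{dblcov}): one has $q(X)=h^1(\O_X)=h^1(\O_Y)+h^1(L^{-1})=q(Y)+h^1(L^{-1})$, and $h^1(L^{-1})=0$ because $L$ is ample on the (non-rational, in fact of non-negative Kodaira dimension) surface $Y$; indeed $h^1(Y,L^{-1})=h^1(Y,K_Y\otimes L)$ by Serre duality, which vanishes by Kodaira--Kawamata--Viehweg vanishing since $L$ is ample — or, more elementarily in our isotrivial elliptic/abelian situation, because $L$ restricted to a general fibre has positive degree and one runs the Leray spectral sequence of $\alpha$. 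The same computation applies verbatim with $\widetilde Y$, $\delta^*L$ in place of $Y$, $L$.

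The only point requiring a little care is that after the canonical resolution replacing the singular double cover $X_0$ by its minimal smooth model $X$ (needed because $R$ may have negligible singularities), the irregularity is unchanged: blowing up and contracting $(-1)$-curves do not affect $h^1(\O)$, and resolving the negligible (double/triple point) singularities of the branch locus introduces only $(-2)$-configurations on $X$, again leaving $q$ invariant — this is exactly the statement that $X_0$ is the canonical model of $X$ recorded in Section~\ref{dblcov}. Hence the formula $q(\widetilde X)-2=d(q(X)-2)$ holds, completing the proof. The main (though mild) obstacle is making sure the étale base change genuinely preserves the hypotheses on the branch divisor and the identification $q(Y)=g(C)+1$; once these are in place the statement is a direct consequence of $q=q\circ(\text{double cover})$ and the additivity of genus under étale covers.
\end{proof}
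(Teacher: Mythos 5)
Your proof is correct and follows essentially the same route as the paper: both reduce the statement to $q(X)=q(Y)=g(C)+1$ (and likewise $q(\widetilde X)=g(\widetilde C)+1$) and then apply Riemann--Hurwitz to the \'etale cover $\gamma$; you simply make explicit the vanishing $h^1(Y,L^{-1})=0$ that the paper leaves implicit. One small slip: the parenthetical claim that $R^1\alpha_*\O_Y=0$ is false for an elliptic fibration --- if it held, the Leray spectral sequence would give $q(Y)=g(C)$ rather than $g(C)+1$; the correct point is that $R^1\alpha_*\O_Y$ is a degree-zero (torsion) line bundle on $C$ which is trivial exactly when $Y$ has maximal Albanese dimension. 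Since you also give the correct justification via maximal Albanese dimension, this aside does not affect the argument.
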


\begin{proof}
We know that $q(\widetilde{X})=g(\widetilde{C})+1$ and $q(X)=g(C)+1$ because $Y$ and $\widetilde{Y}$ have maximal Albanese dimension.  
Hence, applying Riemann-Hurwitz formula on $\gamma$, we get
\[q(\widetilde{X})-2=g(\widetilde{C})-1=d(g(C)-1)=d(q(X)-2).\qedhere\]
\end{proof}

\begin{remark}
\label{rem_bis}
The importance of the Lemma \ref{etale} in our discussion is given by the following result on elliptic surfaces. It is  known (cf. \cite{serrano}) that given an isotrivial smooth elliptic surface $Y$ on $C$ with  fibres isomorphic to $E$ there exists a suitable Galois \'etale base change giving the following Cartesian diagram:
\[
\begin{tikzcd}
\widetilde{Y}=\widetilde{C}\times E \arrow{r}{\delta}\arrow{d}{\widetilde{\alpha}} \arrow[dr, phantom, "\square"] & Y\arrow{d}{\alpha}\\
\widetilde{C} \arrow{r}{\gamma} & C,
\end{tikzcd}
\]
where the horizontal arrows are \'etale morphisms of degree $d$. In particular there exists a group $G$ acting freely on $E$ and $\widetilde{C}$ such that
\begin{itemize}
	\item $\widetilde{C}/G=C$;
	\item $\widetilde{Y}/G=Y$,
\end{itemize}
where the action on the product is the diagonal one and the quotient maps are given by $\gamma$ and $\delta$. Moreover the pullback of the  elliptic fibre of $\alpha$ is numerically equivalent to $d$ elliptic fibres of $\widetilde{\alpha}$.

In view of this, Lemma \ref{etale} tells us that every surface $X$ satisfying equality in Theorem \ref{miob} with irregularity $q\geq 3$ is an \'etale quotient of a surface $\widetilde{X}$ satisfying the same equality whose minimal model is a double cover of a product elliptic surface $\widetilde{Y}=\widetilde{C}\times E$ branched over a divisor $\widetilde{R}$ for which $\widetilde{R}.E=2$.
\end{remark}

\begin{proof}[Proof of Theorem \ref{mioa}]
This is a direct consequence of Lemma \ref{lem1}, Lemma \ref{lem2} and Remark \ref{rem_bis}. By \cite{lu} Theorem 1.3, we know that the canonical model of $X$ is isomorphic to a double cover of an isotrivial smooth elliptic surface $Y$, with fibre isomorphic to $E$. Moreover this covering is branched over a divisor $R$, with at most negligible singularities, for which $R.E=2$. Combining Remark \ref{rem_bis} with Lemma \ref{lem2} we prove that $Y$ is a product and Lemma \ref{lem1} gives the linear class of the branch divisor.
\end{proof}

\section{Proof of Theorem \ref{miob}}

In this section we  prove Theorem \ref{miob}.

\begin{proof}[Proof of Theorem \ref{miob}]
Recall, by section 4, that we have the following diagram:
\[
\begin{tikzcd}
X \arrow{d}{\pi} & X_t \arrow{d}{\pi_t}\arrow{l}[swap]{\phi_X}\arrow{r}{\phi_t} & X_{t-1}\arrow{d}{\pi_{t-1}}\arrow{r}{\phi_{t-1}} & \dots \arrow{r}{\phi_2} & X_1\arrow{d}{\pi_1} \arrow{r}{\phi_1} & X_0 \arrow{d}{\pi_0}\\
Y=X/i & Y_t \arrow{r}{\psi_t} \arrow{l}[swap]{\psi_Y} & Y_{t-1}\arrow{r}{\psi_{t-1}} & \dots \arrow{r}{\psi_2} & Y_1 \arrow{r}{\psi_1} & Y_0.
\end{tikzcd}
\]

If $Y_0$ is of general type,  the first part of the theorem is proven by equations \ref{eqgen1} and \ref{eqgen2}. In the case $Y_0$ is an Abelian surface the first part is trivial. 

So assume that $Y_0$ is an elliptic surface over a curve $C$ with maximal Albanese dimension. By the classification of surfaces we have that $K_{Y_0}^2=0=\chi(\O_{Y_0})$. The map $\phi_X$ is nothing but a sequence of $n$ blow-ups, in particular $K_X^2=K_{X_t}^2+n$. Moreover the numerical class of the canonical bundle of $Y_0$ is 
\[K_{Y_0}\sim_{\textit{Num}} (2g(C)-2)F+\sum_{j=1}^m(n_j-1)F_j=2(q-2)F+\sum_{j=1}^m(n_j-1)F_j,\]
where $F$ is a general fibre and $n_jF_j$ are the multiple fibres with  $F_j$ reduced.

Rephrasing Equation \ref{eqell1}, we obtain
\begin{equation}
\begin{split}
K_X^2-4\chi(\O_X)=K_{X_t}^2&-4\chi(\O_{X_t})+n=\\
=2(K_{Y_0}^2-4\chi(Y_0))+K_{Y_0}&.R_0+2\sum_{i=0}^{t-1}(m_i-1)+n=\\
=2(q-2)F.R_0+\sum_{j=1}^m(n_j-1&)F_j.R_0+2\sum_{i=0}^{t-1}(m_i-1)+n.
\end{split}
\label{eqell2}
\end{equation} 

We already know that $F.R_0$ is divisible by $2$ (recall that there exists $L_0$ such that $2L_0=R_0$) and strictly positive (otherwise $X$ would  be elliptic). As we have already noticed, the conditions in Theorem \ref{mioa}  are equivalent to the following:
\begin{itemize}
	\item $F.R_0=2$;
	\item $n_j=1\quad \forall\ j$;
	\item $m_i=1\quad \forall\ i$;
	\item $n=0$.
\end{itemize}

If we want to increase slightly $K_X^2-4\chi(\O_X)$, we thus have $4$ possibilities.

First we discuss $n$. We know that if all the $m_i=1$, then all the irreducible components of the exceptional curve in the covering surface are $(-2)$-curves (cf. \cite{barth} table 1 page 109). Moreover these are the only possible rational curves on $X_t$ (ibidem). This means that in this case $n=0$. In particular, if $n>0$, then there exists an $i$ such that $m_i>1$.

Now suppose that there exists an $i$ such that $m_i>1$. By the classification of simple singularities of curves (cf. \cite{barth} II.8) we know that we have two possibilities for $R_0$. If $R_0$ has a singular point $x$ of order greater or equal to $4$, then $(F.R_0)_x\geq 3$ (it may happen that one of the irreducible components of $R_0$ passing through $x$ is a fibre). Hence $F.R_0\geq 4$ because $R_0=2L_0$. The other possibility is that $R_0$ has a triple point $x$ which is not simple. A necessary condition for a  triple point not to be simple is to have a single tangent. If the tangent of $R_0$ in $x$ is transversal to $F$, then $(F.R_0)_x\geq 3$, conversely if it is tangent to $F$, we have $(F.R_0)_x\geq 4$ (it may happen, as before, that one of the irreducible component is $F$ itself). In both cases we have $F.R_0\geq 4$.

Suppose now that $Y_0$ has a multiple fibre $F_j$ with multiplicity $n_j$. In this case we have $F.R_0=n_jF_j.2L_0\geq 2n_j\geq 4$. 

To summarize, whatever quantity we increase, we get $F.R_0\geq 4$: that is to say that whenever 
\[K^2_X>4\chi(\O_X)+4(q-2),\]
we get
\[K^2_X\geq 4\chi(\O_X)+8(q-2)\]
and part 1 is proven.

Now we study the case when $q=2$. First we assume that $Y_0$ is an Abelian surface. In this case we have:
\begin{equation}
\begin{split}
K_X^2-4\chi(\O_X)=K_{X_t}^2&-4\chi(\O_{X_t})+n=\\
=2(K_{Y_0}^2-4\chi(Y_0))+K_{Y_0}&.R_0+2\sum_{i=0}^{t-1}(m_i-1)+n=\\
=2\sum_{i=0}^{t-1}(m_i&-1)+n.
\end{split}
\end{equation}
With the same argument as in part 1 of the proof, if $n>0$, then there exists an $i$ such that $m_i>1$. Then $K_X^2-4\chi(\O_X)> 0$ implies that there exists an $i$ such that $m_i>1$: in particular $K_X^2-4\chi(\O_X)\geq 2$.

Now suppose that $Y_0$ is of general type. If $K_{Y_0}\geq \frac{9}{2}\chi(\O_{Y_0})$, the proof is immediate thanks to Equation \ref{eqgen2}. In the case $K_{Y_0}^2-4\chi(\O_{Y_0})\geq 4(q-2)$ we have
\begin{equation}
\begin{split}
K_X^2-4\chi(\O_X)=2(K_{Y_0}^2-4\chi(\O_{Y_0}))+K_{Y_0}.R_0+2\sum_{i=0}^{t-1}(m_i-1)+n=\\
=2\Bigl(K_{Y_0}^2-4\chi(\O_{Y_0})+K_{Y_0}.L_0+\sum_{i=0}^{t-1}(m_i-1)\Bigr)+n:
\end{split}
\end{equation}
as above, if $n>0$, there exists an $i$ such that $m_i>1$. 
In particular $K_X^2>4\chi(\O_X)$ implies $K_X^2\geq 4\chi(\O_X)+2$ and part two of the Theorem is proven.

Now suppose that equality holds: it is enough to prove that $Y_0$ is an elliptic surface. Indeed, if it is, it is immediate from Equation \ref{eqell2} that the conditions of the Theorem are necessary and sufficient. 

Suppose by contradiction that $Y_0$ is a surface of general type. For the numerical invariants of $Y_0$ we have two possibilities. First, if $K_{Y_0}^2\geq \frac{9}{2}\chi(\O_X)$, we know that $K_X^2-4\chi(\O_{Y_0})\geq 64\max\{q-3,1\}$ (this is Equation \ref{eqgen2}): then $K^2_X>4\chi(\O_X)+8(q-2)$, a contradiction. The other possible case is if 
\[\frac{9}{2}\chi(\O_{Y_0})>K_{Y_0}^2\geq 4\chi(\O_{Y_0})+4(q-2).\]
If this happens, we have
\[K_X^2-4\chi(\O_X)=2(K_{Y_0}^2-4\chi(\O_{Y_0}))+K_{Y_0}.R_0+2\sum_{i=0}^{t-1}(m_i-1)+n.\]
By \cite{lu} we have  $K_{Y_0}^2-4\chi(\O_{Y_0})\geq 4(q-2)$: thus $K_X^2-4\chi(\O_X)=8(q-2)$ if and only if $K_{Y_0}^2-4\chi(\O_{Y_0})=4(q-2)$, $K_{Y_0}.R_0=0$, $X=X_t$ and $m_i=1$ for $i=0,\dots t-1$. In particular $K_{Y_0}.R_0=0$ implies that $R_0$ can only contain $(-2)$-curves because $Y_0$ is minimal of general type (cf. \cite{barth}  Theorem VII.5.1). If $R_0$ is not empty, then we have that 
\[(K_{Y_0}+\frac{1}{2}R_0).R_0=\frac{1}{2}R_0^2<0.\]
Since $K_X$ is equal to the pull-back of $K_{Y_0}+\frac{1}{2}R_0$, this equation tells us that $K_X$ is not nef, which is a  contradiction.
By this we have proved $X_0=X_t=X$, $Y_0=Y_t=Y$ and $\pi\colon X\to Y$ is an \'etale double cover such that $\alb_X=\alb_Y\circ\pi$ which is clearly a contradiction thanks to Remark \ref{alb}.
\end{proof}

\bibliographystyle{plain}
\bibliography{bibliografia}

\end{document}